\documentclass[12pt]{article}
\usepackage{amsfonts,amsthm}
\usepackage{amssymb,amsmath}
\usepackage{enumerate}
\usepackage{amsmath}

\usepackage[dvipsnames]{xcolor}
\usepackage{hyperref}
\hypersetup{
    colorlinks=true,
    citecolor=blue,
    linkcolor=blue,
    filecolor=magenta,
    urlcolor=blue,
    pdftitle={Overleaf Example},
    pdfpagemode=FullScreen,
    }

\input{amssym.def}

\newtheorem{Definition}{Definition}[section]
\newtheorem{Theorem}[Definition]{Theorem}
\newtheorem{Lemma}[Definition]{Lemma}
\newtheorem{Proposition}[Definition]{Proposition}
\newtheorem{Corollary}[Definition]{Corollary}
\newtheorem{Example}[Definition]{Example}

\newcommand{\be}{\begin{equation}}
\newcommand{\ee}{\end{equation}}

\begin{document}

\title{\bf On Quasi-Nil Clean Rings}
\author{\bf Saikat Das \footnote {e-mail : saikatofficial607@gmail.com} \ and ~\bf Sukhendu Kar  \footnote {e-mail : karsukhendu@yahoo.co.in} \\
{\small Department of Mathematics, Jadavpur University}\\
{\small 188, Raja S. C. Mallick Road, Kolkata - 700032, India.}}
\date{}
\maketitle

\begin{abstract}
In this paper, we introduce a new type of ring, called quasi-nil clean ring, where each element of the ring is the sum of a quasi-idempotent and a nilpotent element. We also investigate a particular class of ring, called strongly quasi-nil clean ring whose each element is the sum of a quasi-idempotent and a nilpotent element, where they commute. Our primary objective is to study the structural properties of these new class of rings, explore their relationships with existing classes of rings and establish some key characterizations. In the commutative setting, we provide a complete characterization in terms of quasi-Boolean quotients. Moreover we discuss about quasi-cleanness of amalgamated algebra and group ring.  \end{abstract}

\noindent
\textbf{Keywords  :} Quasi-idempotent, quasi-Boolean rings, quasi clean rings, quasi-nil clean rings,

\noindent
\textbf {AMS Subject Classification : } $16U99$, $16Z05$.

\section{Introduction} \label{intro}

Ring theory plays a central role in algebra, with numerous generalizations of classical ring structures leading to significant theoretical developments. One of the key concepts in this area is the notion of clean rings, introduced by Nicholson in \cite{n1}. A ring is said to be clean if each of its elements can be expressed as the sum of a unit and an idempotent. Nicholson later introduced the concept of strongly clean ring in \cite{n2}. It is demonstrated in \cite{n2} that a strongly clean element of a ring satisfies a generalized form of Fitting's Lemma, leading to the conclusion that every strongly $\pi$-regular element of a ring is strongly clean.  The study of clean and strongly clean rings has been extensively studied in \cite{a1, c, d1, h}, inspiring various generalizations such as nil-clean rings and strongly clean rings, where the unit is replaced by a nilpotent element, introduced by Disel in \cite{d}. These variants have attracted considerable attention, especially due to their connections with other classes of rings, such as exchange rings, Boolean rings and strongly $\pi$-regular rings.

A recent development in this field is the introduction of quasi-clean rings by Tang et al \cite{qc} in $2023$. This class of rings extends the notion of clean rings by permitting each element to be expressed as the sum of a quasi-idempotent and a unit. Here, a quasi-idempotent is an element $a$ that satisfies $a^2 = ka$ for some central unit $k$ of the ring. This notion has been shown to extend many fundamental properties of clean rings while incorporating a wider range of algebraic structures. Motivated by these advancements, we introduce a new class of rings, called quasi-nil clean rings, which extend the notion of nil-clean rings. In a quasi-nil clean ring, every element is the sum of a quasi-idempotent and a nilpotent element. If, in addition, these two components commute, then the element is called strongly quasi-nil clean and if every element of a ring is strongly quasi-nil clean then the ring is called strongly quasi-nil clean.

The primary objective of this paper is to study the fundamental properties of quasi-nil clean rings and to explore their structural characteristics. The paper explores foundational properties of these rings, proving that every quasi-nil clean ring is necessarily quasi-clean and that its Jacobson radical is nil. It is shown that every finite direct product of quasi-nil clean rings is quasi-nil clean, while this does not necessarily hold for infinite products. The section also explores that quasi-nil clean rings are semipotent and we show that if an ideal $I$ of a strongly quasi-nil clean ring $R$ is a two-sided ideal, then any element of $I$ that is strongly quasi-nil clean in $R$ remains strongly quasi-nil clean in $I$. A particularly notable result is that for any idempotent $e^2=e$, the corner ring $eRe$ is strongly quasi-nil clean whenever $R$ itself is strongly quasi-nil clean and in the abelian case, all nilpotent elements lie in the Jacobson radical. Following this, the paper provides several characterizations of strongly quasi-nil clean rings.

The next section focuses on commutative quasi-nil clean rings, offering a detailed characterization in terms of their Jacobson radical and quotient structure. It is shown that a commutative ring $R$ is quasi-nil clean if and only if $J(R)$ is nil and the quotient ring $R/J(R)$ is quasi-Boolean (i.e., every element of the ring is quasi-idempotent). The paper further demonstrates that localization preserves quasi-nil cleanness in the commutative setting. Moreover in commutative UU-rings, the notions of quasi-clean, clean, nil-clean and quasi-nil clean are shown to be equivalent.

Then the investigation deals with amalgamated algebras along ideals particularly those of the form $A \bowtie^f B$, constructed via a ring homomorphism $f: A \to B$ and an ideal $K \subseteq B$. It is showed that if $A \bowtie^f B$ is quasi-nil clean, then so are both $A$ and $f(A) + K$. Furthermore, if the ideal $K$ is nil, then $A \bowtie^f B$ is quasi-nil clean if and only if $A$ is so. Lastly, this paper addresses group rings and their relation to quasi-nil clean structures. It provides sufficient conditions under which a group ring $RG$ inherits the quasi-nil cleanness. For instance, if $R$ is a quasi-nil clean ring with $2 \in U(R)$ and $G$ is a finite abelian elementary $2$-group, then the group ring $RG$ is quasi-nil clean. The analysis extends to commutative rings of characteristic $2^k$ and torsion $2$-groups, as well as cases where $R$ is quasi-nil clean and $p \in J(R)$, with $G$ an abelian locally finite $p$-group. In each case, the paper rigorously demonstrates how structural aspects of both the ring and group interact to preserve the quasi-nil cleanness.

Throughout this paper, we assume that a ring $R$ is associative with identity $1$. The symbols $J(R)$, $U(R)$, $Id(R)$, $N(R)$, $C(R)$ and $UC(R)$ denotes Jacobson radical of $R$, set of units of $R$, set of idempotents of $R$, set of nilpotents of $R$, center of $R$ and set of central units of $R$ respectively.

\section{Quasi-nil Clean Rings}
We say $a \in R$ is strongly regular when $a \in a^2R \cap Ra^2$. Equivalently, $a \in R$ is strongly regular if and only if $a=ue=eu$, where $e^2=e$ and $u$ is a unit in $R$ if and only if $a^2=ua=au$ for some unit $u$ in $R$. A quasi-idempotent is a special type of strongly regular element. An element $a\in R$ is said to quasi-idempotent if $q^2=kq$ for some central unit $k$ in $R$. It is observed that $q \in R$ is quasi-idempotent if and only if $q=ke$, where $k$ is a central unit and $e$ is an idempotent in $R$.

\begin{Definition}
     An element $a \in R$ is called quasi-nil clean if there is a quasi-idempotent $q$ and a nilpotent $n$ in $R$ such that $a=q+n$. Furthermore the quasi-nil clean element $a\in R$ is called strongly quasi-nil clean if they commute i.e. $qn=nq$. The ring $R$ is called (strongly) quasi-nil clean if each of its elements are  (resp. strongly) quasi-nil clean. \end{Definition}

\begin{Proposition}  \label{j}
    If $R$ is a quasi-nil clean ring, then $J(R)$ is nil.
\end{Proposition}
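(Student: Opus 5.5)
The plan is to take an arbitrary $a \in J(R)$, use the quasi-nil clean decomposition to write it as a quasi-idempotent plus a nilpotent, and show that the idempotent part must vanish. Then $a$ itself is nilpotent.

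Let $a \in J(R)$. By hypothesis $a = q + n$ with $q$ quasi-idempotent and $n \in N(R)$. By the observation preceding the definition, $q = ke$ with $k \in UC(R)$ and $e \in Id(R)$. So
\[
a = ke + n .
\]

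\emph{Step 1: pass to $R/J(R)$.} Write $\bar{x}$ for the image of $x$ in $R/J(R)$. Since $\bar a = 0$, we get $\bar k \bar e = -\bar n$, and the right-hand side is nilpotent. Because $k$ is central, $\bar k$ is central in $R/J(R)$, and $\bar k$ is still a unit there. Hence for $m$ with $n^m = 0$,
\[
0 = (\bar k \bar e)^m = \bar k^{\,m}\bar e^{\,m} = \bar k^{\,m}\bar e .
\]
Multiplying by $\bar k^{-m}$ gives $\bar e = 0$, that is, $e \in J(R)$.

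\emph{Step 2: kill the idempotent.} An idempotent lying in $J(R)$ is zero. Indeed, $1-e$ is a unit, and $e(1-e) = 0$ then forces $e = 0$. Therefore $q = ke = 0$ and $a = n$ is nilpotent. This shows that $J(R)$ is nil.

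The only step needing care is Step 1. There the centrality of $k$ is what lets us pull $k^m$ out of $(ke)^m$; without it, $(ke)^m$ would not simplify to $k^m e$. Everything else is the standard argument used for nil-clean rings.
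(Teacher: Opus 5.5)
Your proof is correct and is essentially the paper's argument: in both, the idempotent $e=k^{-1}q$ is forced to vanish because $1-e$ is a unit. The difference is only in how you get that unit. The paper writes $k(1-e)=(k+n)-a$ directly as a unit minus an element of $J(R)$, whereas you first show that $\bar e$ is a nilpotent idempotent in $R/J(R)$, so $e\in J(R)$.
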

\begin{proof} Let $x \in J(R)$. Since $R$ is quasi-nil clean, we have $x=q+n$, where $q$ is a quasi-idempotent with $q^2=kq$ for some central unit $k$ and $n$ is a nilpotent. Now $k-q=k-(x-n)=k(1+k^{-1}n)-x=v-x$, where $v=k(1+k^{-1}n)$ is a unit in $R$. Therefore, $1-k^{-1}q=k^{-1}v(1-v^{-1}x)$ is a unit in $R$ and  $1-k^{-1}q$ is an idempotent in $R$. Therefore, $1-k^{-1}q=1$ which implies that $q=0$. This shows that $x$ is a nilpotent element in $R$. Hence $J(R)$ is nil. \end{proof}

\begin{Example}
$(i)$. Every nil-clean ring is necessarily a quasi-nil clean ring. However, the converse does not hold in general. For instance, the ring $\mathbb Z_6$  is quasi-nil clean but not nil-clean.

$(ii)$. Every homomorphic image of any (strongly) quasi-nil clean ring is (strongly) quasi-nil clean ring.

$(iii)$. Any finite direct product of quasi-nil clean rings is again quasi-nil clean.

$(iv)$. An infinite direct product of quasi-nil clean rings is not necessarily quasi-nil clean.  For example, consider the ring $R = \displaystyle \prod_{i=1}^\infty \mathbb{Z}_{2^i}$. Since $J(\mathbb{Z}_{2^i}) = 2\mathbb{Z}_{2^i}$ and $\mathbb{Z}_{2^i}/J(\mathbb{Z}_{2^i}) \cong \mathbb{Z}_2$, it follows by \cite[Theorem 2.7]{nc} that $\mathbb{Z}_{2^i}$ is strongly nil clean and consequently it is (strongly) quasi-nil clean. But
\[
\prod_{i=1}^\infty N(\mathbb{Z}_{2^i}) \subseteq \prod_{i=1}^\infty J(\mathbb{Z}_{2^i}) = J(R).
\]
As we have $2 \in N(\mathbb{Z}_{2^i})$ for each $i$, we have $(0,2,2,2,\dots) \in J(R)$. But the element $(0,2,2,\dots)$ is not nilpotent. This implies that $J(R)$ is not nil, which contradicts Proposition $\ref{j}$ i.e., $R$ is not a quasi-nil clean ring. \end{Example}

\begin{Proposition} \label{1.4}
 Every quasi-nil clean elements of $R$ is quasi-clean in $R$.\end{Proposition}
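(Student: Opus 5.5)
The plan is to absorb the nilpotent part into a unit by trading the idempotent $e$ for its complement $1-e$, using the fact that the scalar $k$ is a central unit. This should need only the factorization of quasi-idempotents recalled at the start of the section, not Proposition~\ref{j}.

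Let $a \in R$ be quasi-nil clean, so $a = q + n$ with $q$ quasi-idempotent and $n$ nilpotent. By the observation before the Definition, $q = ke$, where $k \in UC(R)$ and $e \in Id(R)$.

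\textbf{Decomposition.} I would set $f = 1-e$, which is again an idempotent, and take the candidate quasi-idempotent $q' = -kf = -k(1-e)$. The candidate unit is $u = k + n$. The identity to check is
\[
q' + u = -k + ke + k + n = ke + n = a .
\]

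\textbf{$q'$ is quasi-idempotent.} Since $k$ is a central unit, so is $-k$, and $f$ is an idempotent. Hence $q' = (-k)f$ has the form "central unit times idempotent". Concretely, centrality of $k$ gives $q'^2 = k^2 f = (-k)q'$.

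\textbf{$u$ is a unit.} Write $u = k(1 + k^{-1}n)$. Because $k$ is central, $k^{-1}$ commutes with $n$, so $(k^{-1}n)^m = k^{-m}n^m$, which vanishes for large $m$. Thus $k^{-1}n$ is nilpotent. Then $1 + k^{-1}n$ is a unit, with inverse given by the finite geometric series, and so $u \in U(R)$. This is the same manipulation used in the proof of Proposition~\ref{j}.

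Hence $a = q' + u$ is quasi-clean.

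The only delicate step is recognising that one should not try to keep $q$ itself, since $q + $ (unit) would need $n$ to interact well with $e$, which fails when they do not commute. Passing to the complementary quasi-idempotent $-k(1-e)$ moves the whole scalar $k$ into the unit part, where it sits next to $n$. There, centrality of $k$ makes $k+n$ a unit with no commutativity hypothesis between $e$ and $n$. So, unlike the strongly quasi-nil clean setting, no condition $qn = nq$ is needed.
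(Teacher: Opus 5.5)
Your proof is correct and is essentially the paper's argument. The paper writes $a=(q-k)+k(1+k^{-1}n)$ and checks $(q-k)^2=-k(q-k)$ directly from $q^2=kq$; since $q=ke$, your $-k(1-e)$ is exactly $q-k$ and your $k+n$ is exactly $k(1+k^{-1}n)$, the only cosmetic difference being that you go through the factorization $q=ke$ instead.
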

 \begin{proof} Let $a \in R$ be quasi-nil clean. Then $a=q+n$, where $q$ is a quasi-idempotent ($q^2=kq$  for some central unit $k$) and $n$ is nilpotent. Note that $a=(q-k)+k(1+k^{-1}n)$. Since $q^2=kq$, one checks easily that $(q-k)^2=-k(q-k)$, so $q-k$ is again a quasi-idempotent and $k(1+k^{-1}n)$ is a unit in $R$ as both $k$ and $(1+k^{-1}n)$ are units. Hence $a\in R$ is quasi-clean.\end{proof}

Let $A$ be a ring and $B$ a subring of $A$. Define

$\Re[A, B] := \{ (a_1, \ldots, a_l, b, b, \ldots) : \ a_i \in A,\, b \in B,\, l \geq 1 \},$

where addition and multiplication are defined componentwise. Then $\Re[A,B]$ forms a ring under these operations.

\begin{Proposition}
Let $R$ be a quasi-nil clean ring. Then the followings hold :

    $(i)$ $R[x]$ is not quasi-nil clean.

    $(ii)$ $R[[x]]$ is not quasi-nil clean.

    $(iii)$ Let $A$ be a ring and $B$ be a subring of $A$. Then $\Re[A,B]$ is quasi-nil clean if and only if $A$ is quasi-nil clean and for each $b \in B$, $b=q+n$, where $n \in N(B)$ and $q^2=kq$ with $k \in U(B)\cap C(B)$.
\end{Proposition}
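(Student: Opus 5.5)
For part (ii) I would use Proposition~\ref{j} directly. Suppose $R[[x]]$ were quasi-nil clean. For every $f\in R[[x]]$, the series $1-xf$ has constant term $1$ and is therefore a unit of $R[[x]]$. The same holds on the other side, and $x$ is central, so $x\in J(R[[x]])$. But $x$ is not nilpotent, because $x^m\neq 0$ for every $m$ (here $R\neq 0$). So $J(R[[x]])$ is not nil, which contradicts Proposition~\ref{j}. Hence $R[[x]]$ is not quasi-nil clean.

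For part (i), Proposition~\ref{j} alone is not enough: $J(R[x])$ has the form $I[x]$ with $I$ a nil ideal (Amitsur), so no contradiction comes from it. My main plan is to pass through quasi-cleanness. By Proposition~\ref{1.4}, a quasi-nil clean $R[x]$ would be quasi-clean. I would then invoke the result of \cite{qc} that a polynomial ring $R[x]$ is never quasi-clean, which mirrors the classical fact that $R[x]$ is never clean.

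If that citation is not available in exactly this form, I would argue directly with the element $x$. Write $x=ke+n$ with $k\in UC(R[x])$, $e^2=e$ and $n$ nilpotent. I would compare the constant terms and leading coefficients of both sides, using two facts. First, units and central units of $R[x]$ have unit constant term and nilpotent higher coefficients, and a similar statement holds for nilpotent polynomials modulo the nil ideal $I$. Second, idempotents of $R[x]$ reduce modulo $I[x]$ to idempotents whose higher coefficients vanish. Passing to $(R/I)[x]$ then forces the coefficient of $x$ on the right-hand side to lie in a nil ideal. This is impossible because on the left it equals $1$. I expect this step to be the main obstacle, since it needs careful control of idempotents and nilpotents in a noncommutative polynomial ring.

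For part (iii), I would argue in both directions. Assume first that $\Re[A,B]$ is quasi-nil clean. The projection onto the first coordinate is a surjective ring homomorphism onto $A$, so $A$ is quasi-nil clean by Example~(ii). For $b\in B$, I would decompose the constant element $(b,b,\dots)=(q_i)+(n_i)$. For large $i$ the tails of the quasi-idempotent and the nilpotent are constant, equal to some $q\in B$ and $n\in B$. Also $n\in N(B)$, and $q^2=kq$, where $k$ is the eventual tail value of the central unit. I would check that this tail $k$ lies in $U(B)\cap C(B)$: its inverse is again eventually constant in $B$, and centrality against the elements $(0,\dots,0,c,c,\dots)$ with $c\in B$ gives commutation with all of $B$.

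Conversely, take $(a_1,\dots,a_l,b,b,\dots)$. I would write each $a_i=k_ie_i+n_i$ in $A$ and $b=q+n$ in $B$ as in the hypothesis, with $q^2=k q$, and assemble the componentwise tuples. The tuple of the $n_i$ followed by $n$ is nilpotent, since only finitely many distinct nilpotency indices occur. The tuple $(k_1,\dots,k_l,k,k,\dots)$ is a central unit of $\Re[A,B]$, since each coordinate commutes with the elements that can appear in that coordinate. Finally, the tuple $(k_1e_1,\dots,k_le_l,q,q,\dots)$ squares to the $k$-tuple times itself, so it is a quasi-idempotent.
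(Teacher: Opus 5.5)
Your treatment of (i) and (ii) follows the paper. For (i) you reduce to quasi-cleanness and use the Tang et al.\ result that $R[x]$ is never quasi-clean. For (ii) you apply the nil-radical proposition to $x\in J(R[[x]])$, which you check directly instead of quoting Lam.

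The gap is in the converse of (iii). You claim that $(k_1,\dots,k_l,k,k,\dots)$ is central in $\Re[A,B]$ "since each coordinate commutes with the elements that can appear in that coordinate". This is false when you only know $k\in C(B)$. The prefix length of elements of $\Re[A,B]$ is unbounded, so for every $j>l$ and every $a\in A$, the element $(0,\dots,0,a,0,0,\dots)$ with $a$ in position $j$ lies in $\Re[A,B]$. Hence every central element of $\Re[A,B]$ has all its coordinates, tail included, in $C(A)$, and the tail of a central unit lies in $U(B)\cap C(A)$. This is the condition the paper's proof actually uses in both directions; the $C(B)$ in the statement is a slip. To fix your argument, test centrality in the necessity part against these elements rather than against $(0,\dots,0,c,c,\dots)$. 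Your sufficiency argument then goes through verbatim with $k\in U(B)\cap C(A)$.

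The step cannot be rescued with $C(B)$, because the equivalence as printed is false. Take $A=M_2(\mathbb{F}_2)$, which is nil clean, and $B=\{0,1,\omega,\omega^2\}\cong\mathbb{F}_4$ with $\omega=\begin{pmatrix}0&1\\1&1\end{pmatrix}$. Every $b\in B$ satisfies the printed hypothesis: take $n=0$ and $q=k=b$ for $b\neq 0$. But $U(A)\cap C(A)=\{1\}$, so $1$ is the only central unit of $\Re[A,B]$, and its quasi-idempotents are just idempotents. Writing $(\omega,\omega,\dots)=Q+N$ and passing to tails would give $\omega=e+n$ with $e\in Id(B)=\{0,1\}$ and $n\in N(B)=\{0\}$, which is impossible.

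Separately, your fallback argument for (i) relies on coefficient descriptions that fail for noncommutative $R$. For example, with matrix units $E_{11},E_{12}$, the element $E_{11}+xE_{12}$ is an idempotent of $M_2(\mathbb{F}_2)[x]$ with nonzero $x$-coefficient, even though $J=0$ there. A correct direct argument runs as follows. From $x=ke+n$, evaluate at $0$; this makes $e(0)$ a nilpotent idempotent, so $e(0)=0$. Comparing lowest-degree terms in $e=e^2$ then forces $e=0$, so $x=n$ would be nilpotent, which is impossible for $R\neq 0$.
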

\begin{proof} $(i)$ If $R[x]$ is quasi-nil clean then by Proposition \ref{1.4}, $R[x]$ is quasi clean which contradicts \cite[Proposition 2.8 (3)]{qc}.

$(ii)$ Suppose $R[[x]]$ is quasi-nil clean. Then by Proposition \ref{j}, we have $J(R[[x]])$ is nil. From \cite[Exercise 5.6]{lam}, it is known that $J(R[[x]])=\{ a+xf(x)~:~ a \in J(R), f(x) \in J(R[[x]]) \}$. Therefore, it is clear that $x^i\in J(R[[x]])$ for any $i \geq 2$. But $x^i$ is not nilpotent, so $J(R[[x]])$ is not nil, which is a contradiction. Therefore, $R[[x]]$ is not quasi-nil clean.

$(iii)$ Let $T = \Re[A, B]$. Assume that $T$ is quasi-nil clean. As $A$ is an image of $T$, $A$ is quasi-clean by $(i)$. Let $b \in B$ and let $\beta = (b, b, \ldots, b, b, \ldots) \in T$. Write $\beta = (e_i) + (n_i)$, where $(e_i)$ is an $(k_i)$-idempotent in $T$ and $(n_i)$ is nilpotent in $T$. We may write $(e_i) = (e_1, \ldots, e_l, e, e, \ldots)$, $(n_i) = (n_1, \ldots, n_l, n, n, \ldots)$ and $(k_i) = (k_1, \ldots, k_l, k, k, \ldots)$. Thus it follows that $u, e \in B$, $k \in U(B) \cap C(A)$, $e^2 = ke$ and $b = e + n$.

For the necessary part, let $\alpha = (a_1, \ldots, a_l, b, b, \ldots) \in T$. Then $b = e + n$, where $n \in N(B)$ and $e \in B$ with $e^2 = ke$ for some $k \in U(B) \cap C(A)$. For each $i$, write $a_i = e_i + n_i$, where $e_i$ is a $k_i$-idempotent in $A$ and $n_i \in N(A)$. Thus $(k_i) = (k_1, \ldots, k_n, k, k, \ldots)$ is a central unit in $T$ and $\alpha = (e_1, \ldots, e_n, e, e, \ldots) + (n_1, \ldots, n_l, n, n, \ldots)$ is a sum of a $(k_i)$-idempotent and a nilpotent. So $\alpha$ is quasi-nil clean.
\end{proof}

We say that an associative unital ring $R$ is unit-central (see \cite{k1}) if $U(R) \subseteq C(R)$, i.e. In a unit-central ring $R$ every invertible elements of $R$ are in the center of $R$.

\begin{Proposition} \label{p1.6}
Let $R$ be a unit central ring and $I$ be a nil ideal of $R$. Then $R$ is a quasi-nil clean ring if and only if $R/I$ is a quasi-nil clean ring. \end{Proposition}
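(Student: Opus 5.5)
The forward direction is immediate from Example (ii): $R/I$ is a homomorphic image of $R$, hence quasi-nil clean. The work is in the converse. Assume $R/I$ is quasi-nil clean and write $\bar a = a+I$ for $a\in R$. The plan has three steps: lift the decomposition of $\bar a$ to $R$, replace the lifted idempotent by a genuine idempotent of $R$, and then absorb all error terms into a nilpotent.

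Let $a\in R$ and write $\bar a=\bar q+\bar n$, where $\bar q^2=\bar k\bar q$ for some central unit $\bar k$ of $R/I$ and $\bar n$ is nilpotent in $R/I$. We organize the argument as follows.

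\textbf{Lifting the unit.} Since $I$ is nil, $I\subseteq J(R)$. Hence any preimage $k\in R$ of the unit $\bar k$ is a unit of $R$. Because $R$ is unit central, $k$ is central. So $k\in UC(R)$.

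\textbf{Lifting the idempotent.} Put $f=k^{-1}q$. Then $\bar f^2=\bar f$ in $R/I$. Idempotents lift modulo nil ideals, so there is $e^2=e$ in $R$ with $e-f\in I$. Then $ke$ is a quasi-idempotent of $R$, and
\[
ke-q = k(e-f)\in I .
\]

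\textbf{Nilpotent part.} Set $m=a-ke$. Then
\[
\bar m=\bar a-\bar q=\bar n
\]
is nilpotent in $R/I$. So $m^t\in I$ for some $t$, and $m^t$ is nilpotent because $I$ is nil. Hence $m$ is nilpotent in $R$, and $a=ke+m$ is quasi-nil clean.

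This shows that any preimage of $\bar n$ is nilpotent, so no commutation or further correction is needed. The nilpotent part therefore causes no difficulty.

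The main obstacle is the lifting step, which uses two facts. The first is that the lifted $k$ is central; this is exactly where unit-centrality is needed, since an arbitrary preimage of a central element need not be central. The second is the standard lifting of idempotents modulo a nil ideal. I would cite the latter from \cite{lam}, where it is shown via $e=\sum$ of a suitable polynomial in $f$.
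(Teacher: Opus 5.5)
Your proposal is correct and follows the same route as the paper: lift the central unit and the idempotent modulo the nil ideal $I$, then absorb the remaining error into the nilpotent part. You also make explicit two points the paper leaves implicit: the lifted unit $k$ is central precisely because $R$ is unit central, and $a-ke$ is nilpotent because its image $\bar n$ is nilpotent and $I$ is nil.
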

\begin{proof}
   First, assume that $R/I$ is a quasi-nil clean ring. Let $r \in R$. Then there exists a central unit $\overline{k}$,  idempotent $\overline{e}$ and a nilpotent $\overline{n}$ of $R/I$ such that $\overline{r}=\overline{ke}+ \overline{n}$. As $I$ is nil, idempotents and units of $R/I$ can be lifted to $R$, thus we may assume $k \in U(R)$ and $e \in R$ is idempotent. Hence $r=ke+n+b$ for some $b \in I$ and then $r=ke+(n+b)$ is a quasi-nil clean decomposition in $R$. Thus $R$ is a quasi-nil clean ring.
   
   Conversely, the reverse implication is immediate since $R/I$ is a homomorphic image of $R$, and the quasi-nil clean property is preserved under homomorphic images.
\end{proof}

\begin{Proposition} \label{1.4}
    Let $I$ be a right (left) ideal of $R$. Suppose $a$ is an element of $I$ such that $a=ke+n$ for some $k\in UC(R)$, $e \in Id(R)$ and $n \in N(R)$. If either $I$ is a two-sided ideal or $en=ne$, then $e,n \in I$. \end{Proposition}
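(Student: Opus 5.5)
The plan is to treat the two hypotheses separately, using the same idea in both cases: show first that $e\in I$. Once that is done, $n=a-ke\in I$ follows easily. Since $k$ is central, $ke=ek$. So $ke$ lies in $I$ whether $I$ is a right ideal ($ek\in eR\subseteq I$) or a left ideal ($ke\in Re\subseteq I$). Hence $n=a-ke\in I$ in either case, and the whole argument reduces to proving $e\in I$.

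\emph{Case 1: $I$ is a two-sided ideal.} We pass to the factor ring $\overline{R}=R/I$. Since $a\in I$, the relation $a=ke+n$ gives $\overline{k}\,\overline{e}=-\overline{n}$ in $\overline{R}$, where $\overline{k}$ is a central unit of $\overline{R}$. Hence $\overline{e}=-\overline{k}^{-1}\overline{n}$. Because $\overline{k}^{-1}$ is central, $\overline{e}^m=(-1)^m\overline{k}^{-m}\overline{n}^m$, which vanishes for $m$ large since $n$ is nilpotent. Thus $\overline{e}$ is both idempotent and nilpotent, so $\overline{e}=\overline{e}^m=0$, that is, $e\in I$.

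\emph{Case 2: $en=ne$ and $I$ is only one-sided.} Now we cannot use a quotient ring, so the argument works inside $R$. Since $k$ is central and $e$ commutes with $n$, the elements $k,e,n,a$ pairwise commute. In particular $ke=a-n$ with $a$ and $n$ commuting. Choose $m$ with $n^m=0$ and expand by the binomial theorem:
\[
k^m e=(ke)^m=(a-n)^m=a\,p=p\,a,
\]
where $p=\sum_{j=1}^{m}\binom{m}{j}a^{j-1}(-n)^{m-j}$ commutes with $a$. If $I$ is a right ideal, then $ap\in aR\subseteq I$. If $I$ is a left ideal, then $pa\in Ra\subseteq I$. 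In either case $k^m e\in I$. Finally, $e=(k^me)k^{-m}=k^{-m}(k^me)$ by centrality of $k$, so $e\in I$ whichever side $I$ is an ideal on.

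The only delicate point is the second case. There one must make sure that every multiplication needed to absorb elements into the one-sided ideal happens on the correct side. This is exactly where the commutation $en=ne$ and the centrality of $k$ are used, since together they let us write $k^me$ both as $ap$ and as $pa$.
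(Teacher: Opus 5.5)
Your proof is correct, but it takes a different route from the paper's.

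\textbf{The paper's route.} It uses one identity for both cases. It sets $p=k^{-1}n$ and takes the quasi-inverse $q$ of $p$, so that $p+q+pq=0$. It then computes $ea(1+q)=ke$, and this identity needs no commutation hypothesis at all. Hence $e=k^{-1}ea(1+q)\in RaR$ always, which settles the two-sided case immediately. When $en=ne$, the paper rewrites this as $e=ae(1+q)k^{-1}\in aR$.

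\textbf{Your route.} You split the cases and never use quasi-regularity. For a two-sided ideal you pass to $R/I$: there $\overline{e}$ is a central-unit multiple of a nilpotent, hence nilpotent, and being idempotent it must be zero. In the commuting case you expand $(a-n)^m$ by the binomial theorem and get $k^me=ap=pa$.

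\textbf{Comparison.} The paper gains a single explicit formula expressing $e$ through $a$ that covers both cases. Your argument is more elementary and more careful about sides, in two places:
\begin{itemize}
\item The paper's final expression $ae(1+q)k^{-1}$ visibly lies only in a right ideal. For a left ideal one must also note that $q$, being a polynomial in $n$, commutes with $a$ and $e$.
\item The paper's step ``$e\in I$ implies $n\in I$'' silently uses the centrality of $k$ to get $ke\in I$ when $I$ is one-sided.
\end{itemize}
Your write-up states both of these points explicitly.
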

\begin{proof}
    Let $p=k^{-1}n$, a nilpotent element in $R$. Then it follows that $p$ is quasi-regular in $R$. Thus there is an element $q \in R$ such that $p+q+pq=0$. Now since $a=ke+n$, we have $ea+eaq=ke+en+keq+enq=ke+ke(k^{-1}n+q+k^{-1}nq)=ke+ke(p+q+pq)=ke$.
Thus we have $ea(1+q)=ke$. Then we find that $k^{-1}ea(1+q)=e$. If $I$ is a two-sided ideal of $R$, $e \in I$ implies that $n \in I$. On the other hand, if $en=ne$, then $ae=ea$ and $k$ is a central unit in $R$. Therefore, $e=ae(1+q)k^{-1} \in I$. This implies that $n \in I$.
\end{proof}

\begin{Corollary}\label{cor1.7}
     Let $R$ be a ring and $I$ be a right (left) ideal of $R$. If $a \in I$ is strongly quasi-nil clean in $R$, then $a$ is strongly quasi-nil clean in $I$.\end{Corollary}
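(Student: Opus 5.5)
The plan is to derive this directly from Proposition \ref{1.4}, using the commuting hypothesis $en=ne$ in place of two-sidedness. Since $I$ is only a one-sided ideal and may lack an identity, I read ``strongly quasi-nil clean in $I$'' as follows: $a=q+n$ with $q,n\in I$, $q^2=kq$ for a central unit $k$ of $R$ (the only source of central units available), $n$ nilpotent, and $qn=nq$. So the goal is to show that both components of a strongly quasi-nil clean decomposition of $a$ in $R$ already lie in $I$.

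First, write the decomposition in $R$ as $a=q+n$ with $q^2=kq$, $k\in UC(R)$, $n\in N(R)$ and $qn=nq$. By the remark preceding the Definition, $q=ke$ with $e=k^{-1}q$. This $e$ is idempotent, since
\[
e^2=k^{-2}q^2=k^{-1}q=e.
\]
Also, because $k^{-1}$ is central, $en=k^{-1}qn=k^{-1}nq=ne$. Hence $a=ke+n$ with $e\in Id(R)$, $n\in N(R)$ and $en=ne$.

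Second, Proposition \ref{1.4} applies in its commuting case and gives $e,n\in I$. It remains to check that $q=ke$ itself lies in $I$. If $I$ is a right ideal, then $q=ke=ek\in I$, because $k$ is central and $e\in I$. If $I$ is a left ideal, then $q=ke\in I$ directly. Alternatively, $q=a-n$ with $a,n\in I$, which settles both cases at once.

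Finally, the relations $q^2=kq$, the nilpotency of $n$ and $qn=nq$ are unchanged by regarding $q$ and $n$ as elements of $I$. So $a=q+n$ is a strongly quasi-nil clean decomposition inside $I$. The only delicate point is the meaning of a quasi-idempotent in the possibly non-unital ring $I$: the constant $k$ must be taken as a central unit of $R$ rather than of $I$. The substantive work, namely $e,n\in I$, is already done by Proposition \ref{1.4}.
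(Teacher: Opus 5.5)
Your proof is correct and follows the paper's own argument. You rewrite the decomposition as $a=ke+n$ with $e=k^{-1}q$ idempotent and $en=ne$, then apply the preceding proposition on one-sided ideals in its commuting case to get $e,n\in I$. Your added remarks, that $q=ke=a-n$ also lies in $I$ and that $k$ must be taken as a central unit of $R$ because $I$ need not have an identity, only spell out what the paper's two-line proof leaves implicit.
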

\begin{proof}
    Since $a \in I$ is strongly quasi-nil clean in $R$, it can be written as $a=ke+n$, where $k$ is a central unit, $e$ is an idempotent, and $n$ is a nilpotent element of $R$ satisfying $en = ne$. By Proposition~\ref{1.4}, we have $e, n \in I$. Hence, $a$ is strongly quasi-nil clean in $I$.
\end{proof}

\begin{Corollary}
  Let $R$ be a quasi-nil clean ring and $I$ be an ideal of $R$. Then $I$ is quasi-nil clean. \end{Corollary}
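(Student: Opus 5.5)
The natural route is the second Proposition labelled \ref{1.4} (the one about ideals), used in its two-sided form. Since $I$ is an ideal, $I$ is a ring without identity, so we first fix what quasi-nil cleanness of $I$ means. We read it as follows: every $a\in I$ can be written as $a=ke+n$ with $e\in I$ an idempotent, $n\in I$ nilpotent, and $k$ a central unit of $R$. Then $ke\in I$ is a quasi-idempotent, with $(ke)^2=k(ke)$. This matches how Corollary \ref{cor1.7} interprets "strongly quasi-nil clean in $I$."

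\textbf{Step 1 (decompose in $R$).} Let $a\in I$. Since $R$ is quasi-nil clean, there is a quasi-idempotent $q$ and a nilpotent $n$ with $a=q+n$. By the observation at the beginning of Section 2, $q=ke$ with $k\in UC(R)$ and $e\in Id(R)$. Hence $a=ke+n$, with no commutativity assumed between $e$ and $n$.

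\textbf{Step 2 (pull the pieces into $I$).} Because $I$ is two-sided, Proposition \ref{1.4} applies without the hypothesis $en=ne$, and gives $e,n\in I$. For completeness, the mechanism is this. With $p=k^{-1}n$ and its quasi-inverse $q'$, one has $e=k^{-1}ea(1+q')$. This lies in $I$ because $a\in I$ and $I$ absorbs multiplication on both sides. Then $n=a-ke\in I$, since $ke=k\cdot e\in I$.

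\textbf{Step 3 (conclude).} Now $ke\in I$ is a quasi-idempotent and $n\in I$ is nilpotent, so $a$ is quasi-nil clean inside $I$. As $a\in I$ was arbitrary, $I$ is quasi-nil clean.

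The only potential obstacle is definitional: the scalar $k$ lies in $R$ but need not lie in $I$, and $I$ has no identity of its own. The plan handles this by allowing the central unit to come from $R$, in the same spirit as Corollary \ref{cor1.7}. If instead one demands the quasi-idempotent itself to lie in $I$, this is already satisfied, since $ke\in I$.
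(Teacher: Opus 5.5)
Your proof is correct and is essentially the argument the paper intends. The paper states this corollary without proof right after the proposition on ideals preceding Corollary~\ref{cor1.7}, and it follows exactly as you do: decompose $a=ke+n$ in $R$, then apply the two-sided case of that proposition to get $e,n\in I$. Your handling of the non-unital reading, taking $k\in UC(R)$ and noting $ke\in I$, matches the convention implicit in Corollary~\ref{cor1.7}.
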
 

\begin{Proposition}
 Let $e \in Id(R)$. If the element $a \in eRe$ is strongly quasi-nil clean in $R$, then it is strongly quasi-nil clean in $eRe$. \end{Proposition}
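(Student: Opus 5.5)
The plan is to take a strongly quasi-nil clean decomposition of $a$ in $R$ and show that its idempotent and nilpotent parts already lie in $eRe$. Then only the central unit needs adjusting, by replacing $k$ with $ke$. Write $a=kf+n$ with $k\in UC(R)$, $f\in Id(R)$, $n\in N(R)$ and $fn=nf$. Because $k$ is central and $f$ commutes with $n$, both $f$ and $n$ commute with $a$. Since $a\in eRe$, we also have $a=ea=ae$.

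The key step, and the main obstacle, is showing $f\in eRe$. I would reuse the quasi-regularity trick from the proof of Proposition~\ref{1.4}. Put $p=k^{-1}n$, which is nilpotent, and take its quasi-inverse $q=\sum_{i\ge 1}(-p)^i$ (a finite sum). This $q$ is a polynomial in $p$, so it commutes with $f$, $n$ and $a$. The computation in Proposition~\ref{1.4} gives $fa(1+q)=kf$. Since everything commutes, this yields both
\[
f=k^{-1}a\,f(1+q)\quad\text{and}\quad f=k^{-1}f(1+q)\,a .
\]
Using $a=ea$ in the first expression gives $ef=f$. Using $a=ae$ in the second gives $fe=f$. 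Hence $f=efe\in eRe$, and $f$ is an idempotent of $eRe$.

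Next, $n=a-kf$. Since $ea=a=ae$, $ef=f=fe$ and $k$ is central, we get $en=n=ne$. So $n\in eRe$, and it is nilpotent there.

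For the unit, set $k'=ke=ek=eke$. It is central in $eRe$ because $k$ is central in $R$. It is a unit of $eRe$ with inverse $k^{-1}e$, since $(ke)(k^{-1}e)=e$. Moreover $k'f=kef=kf$, so $a=k'f+n$. Here $k'f$ is a quasi-idempotent of $eRe$, because $(k'f)^2=k'(k'f)$, and it commutes with $n$ because $fn=nf$. This gives the required strongly quasi-nil clean decomposition of $a$ in $eRe$.
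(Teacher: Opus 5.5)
Your proof is correct and uses the same mechanism as the paper, namely the quasi-regularity identity $fa(1+q)=kf$ from the ideal lemma behind Corollary~\ref{cor1.7}. The paper applies that corollary twice (first to the left ideal $Re$ of $R$, then to the right ideal $eRe$ of $Re$), whereas you obtain $ef=f=fe$ in one pass from $a=ea=ae$, and you also make explicit the replacement of $k$ by the central unit $ke$ of $eRe$, which the paper leaves implicit.
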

\begin{proof}
  Given that $a$ is strongly quasi-nil clean in $R$. Then we can write $a=kf+n$ for some central unit $k$, idempotent $f$ and nilpotent $n$ in $R$. Note that $a \in Re$ and $Re$ is a left ideal of $R$. Thus by using Corollary \ref{cor1.7}, it follows that $a$ is strongly quasi-nil clean in $Re$. Similarly, $a \in eRe$ and $eRe$ is a right ideal of $Re$. Therefore, again by using Corollary \ref{cor1.7}, we find that $a$ is strongly quasi-nil clean in $eRe$.\end{proof}

\begin{Corollary}
    If $R$ is a strongly quasi-nil clean ring and $e \in Id(R)$, then $eRe$ is a strongly quasi-nil clean ring.\end{Corollary}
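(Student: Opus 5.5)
The plan is to apply the preceding Proposition elementwise to the corner ring. Let $R$ be strongly quasi-nil clean and $e \in Id(R)$, and take an arbitrary $a \in eRe$. Since $a$ is in particular an element of $R$, it is strongly quasi-nil clean in $R$. The preceding Proposition then says that $a$ is strongly quasi-nil clean in $eRe$. As $a$ was arbitrary, $eRe$ is a strongly quasi-nil clean ring.

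The only step needing care is what "strongly quasi-nil clean in $eRe$" means, because the identity of $eRe$ is $e$ rather than $1$. The decomposition must therefore have the form $a = k'f' + n'$ with:
\begin{itemize}
\item $k'$ a central unit of $eRe$,
\item $f'$ an idempotent of $eRe$,
\item $n'$ a nilpotent element of $eRe$,
\item $f'n' = n'f'$.
\end{itemize}
The preceding Proposition passes through Corollary \ref{cor1.7} twice, first for the left ideal $Re$ and then for the right ideal $eRe$ of $Re$. I will check that the resulting decomposition $a = kf + n$ has the right shape.

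The key points I would verify are these:
\begin{itemize}
\item Corollary \ref{cor1.7}, via Proposition \ref{1.4}, gives $f, n \in eRe$ when $fn = nf$.
\item $f$ and $n$ are then respectively an idempotent and a nilpotent element of $eRe$, and they still commute.
\item The scalar in $eRe$ should be $ek = ke = eke$. This is a central element of $eRe$, because $k \in C(R)$ gives $(eke)(exe) = ekxe = (exe)(eke)$.
\item It is a unit of $eRe$ with inverse $ek^{-1}$.
\item Since $f \in eRe$, we have $kf = (ek)f$, so $a = (ek)f + n$ is a valid decomposition inside $eRe$.
\end{itemize}

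The main obstacle I anticipate is this bookkeeping of the central unit in the non-unital intermediate ring $Re$. I would bypass it by arguing directly from Proposition \ref{1.4} that $f, n \in eRe$, and then using $ek$ as the central unit of $eRe$. With that observation in place the corollary follows immediately.
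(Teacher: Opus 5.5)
Your proposal is correct and follows the paper's route: the corollary is the elementwise application of the preceding Proposition, which the paper proves by invoking Corollary \ref{cor1.7} first for $Re$ and then for $eRe$. Your extra bookkeeping, taking $ek$ as the central unit of $eRe$, tightens a point the paper glosses over. Your ``directly from the ideal proposition'' step becomes precise once you apply that proposition to the left ideal $Re$ and the right ideal $eR$ of $R$ and use $Re\cap eR=eRe$.
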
  

Let $J(R)$ be the Jacobson radical of $R$. Then $R$ is called a semipotent ring if each one-sided ideal $I$ not contained in $J(R)$ contains a nonzero idempotent.

\begin{Proposition} \label{p1.11}
    Let $R$ be a quasi-nil clean ring. Then $R$ is a semipotent ring. \end{Proposition}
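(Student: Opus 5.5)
The plan is to reduce semipotence to an element-wise statement and then use the decomposition $a=ke+n$ together with the quasi-regularity computation from Proposition \ref{1.4}. Let $I$ be a right ideal of $R$ with $I\not\subseteq J(R)$; the left-ideal case is symmetric. Pick $x\in I\setminus J(R)$. Since $x\notin J(R)$, there is $r\in R$ such that $1-xr$ is not invertible. Put $b=xr\in I$, so that $bR\subseteq I$. It therefore suffices to produce a nonzero idempotent in $bR$.

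First, write $b=ke+n$ with $k\in UC(R)$, $e\in Id(R)$ and $n\in N(R)$. If $e=0$, then $b$ is nilpotent and $1-b$ is a unit, which is a contradiction; hence $e\neq 0$. Since $k$ is a central unit, replacing $b$ by $c=k^{-1}b\in I$ changes nothing essential, so we may assume $c=e+m$ with $m=k^{-1}n$ nilpotent. The idea is now to imitate the proof of Proposition \ref{1.4}. Let $q$ be the quasi-inverse of $m$, so that $m+q+mq=0$. Then $ec(1+q)=e$, which shows that $e\in Rc$. This is the wrong side, since we want an idempotent in $cR\subseteq I$.

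The plan for transferring to the correct side is the standard observation that if $e=sc$ with $e^2=e$, then $f=csec$ satisfies $f^2=cs(ec\,s\,e)c$. A cleaner choice is $f=c\,(1+q)e$. Using $e=ec(1+q)$ one computes
\[
f^2=c(1+q)\,e\,c(1+q)\,e=c(1+q)\,e\,e=f .
\]
Moreover $f\in cR\subseteq I$. Finally $f\neq 0$, because $ef=ec(1+q)e=e\neq 0$. Thus $f$ is a nonzero idempotent in $I$, and $R$ is semipotent.

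The step I expect to need the most care is checking the identity $ec(1+q)=e$ when $e$ and $m$ do not commute. In Proposition \ref{1.4} the computation multiplies on the left by $e$ and only uses $e^2=e$ and $m+q+mq=0$, so it should go through verbatim: $ec(1+q)=(e+em)(1+q)=e+e(m+q+mq)=e$. The left-ideal case is handled symmetrically. There one uses a left quasi-inverse $q'$ with $m+q'+q'm=0$, obtains $(1+q')ce=e$, and takes $f=e(1+q')c\in Rc$.
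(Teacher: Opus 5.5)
Your proof is correct, but it builds the idempotent differently from the paper.

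\textbf{The paper's route.} It treats a left ideal, picks $a\in I$ with $1-a$ not a unit, and decomposes $a-1$ rather than $a$. From $a-1=ke+n$ it gets $a=ke+u$ with $u=1+n$ a unit. It gets $e\neq 1$, since otherwise $1-a=-k(1+k^{-1}n)$ would be a unit. Because $k$ is central, $(1-e)a=(1-e)u$. Hence the conjugate $u^{-1}(1-e)u=u^{-1}(1-e)a$ is a nonzero idempotent in $Ra\subseteq I$. Idempotence and nonvanishing come for free from conjugation, and no normalization by $k^{-1}$ or quasi-inverse is needed. The right-ideal case is the mirror identity $u(1-e)u^{-1}=a(1-e)u^{-1}\in aR$.

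\textbf{Your route.} You keep the unshifted decomposition, obtain $e\neq 0$, and move $e$ into $cR$ by taking $f=c(1+m)^{-1}e=e-(1-e)(1+m)^{-1}e$. This costs a short computation. In return it gives a little more information: $ef=e$ and $fe=f$, so $Rf=Re$. Thus the idempotent you find in $cR$ generates the same left ideal as the idempotent part of $c$.

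\textbf{Two inaccurate side remarks (not used).}
\begin{enumerate}
\item The identity $ec(1+q)=e$ shows $e\in ecR$, not $e\in Rc$. For example, in $M_2(F)$ take $c=E_{11}+E_{12}$ with $e=E_{11}$. Then every row of a matrix in $Rc$ is a multiple of $(1,1)$, so $E_{11}\notin Rc$.
\item The ``standard observation'' should read: if $e=sc$ is idempotent, then $ces\in cR$ is an idempotent, and it is nonzero because $s(ces)c=e$. Your formula $f=csec$ is not that.
\end{enumerate}
Neither remark affects the argument, since you verify $f^2=f$, $f\in cR$ and $ef=e\neq 0$ directly. The same holds for the left-ideal version with $f=e(1+q')c$.
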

\begin{proof}
Let $I$ be a left ideal of $R$ such that $I \not\subset J(R)$. Then there is an element $a \in I$ such that $1-a$ is not a unit in $R$. As $R$ is quasi-nil clean, $a-1=ke+n$ for some $k \in UC(R)$, $e \in Id(R)$ and $n \in N(R)$. Note that $e \neq 1$, otherwise $1-a=-k-n=-k(1+k^{-1}n)$ is a unit. Thus $a=ke+1+n$ that implies $a=ke+u$,  where $u=1+n$ is a unit in $R$. If we take $f=u^{-1}(1-e)u$, then $f$ is a non zero idempotent and $f=u^{-1}(1-e)u=u^{-1}(1-e)(a-ke)=u^{-1}(1-e)a \in I$. Hence, $R$ is a semipotent ring.\end{proof}

 A ring $R$ is said to be Dedekind finite if for any $a,b \in R$, $ab=1$ implies that $ba=1$. Equivalently, $R$ is Dedekind finite if every left or right invertible element in $R$ is invertible. All abelian rings (rings with central idempotents) are Dedekind finite rings.

\begin{Definition} \cite{qc} A ring $R$ is called a quasi-Boolean ring if every element of $R$ is quasi-idempotent.\end{Definition}

\begin{Proposition}\cite{b} \label{p1.12}
 Let $e\in Id(R)$. Then $R$ is Dedekind finite implies that $eRe$ is also Dedekind finite.\end{Proposition}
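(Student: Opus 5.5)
The plan is to prove the statement directly from the definition: take $a,b\in eRe$ with $ab=e$ (the identity of $eRe$) and show $ba=e$ by moving the relation up to $R$. In $R$ the element $e$ is not the identity, so I will pad $a$ and $b$ with the complementary idempotent $1-e$. Set
\[
x=a+(1-e), \qquad y=b+(1-e).
\]
Since $a,b\in eRe$ we have $a(1-e)=(1-e)a=0$ and $b(1-e)=(1-e)b=0$. Expanding, the cross terms vanish and
\[
xy=ab+(1-e)^2=e+(1-e)=1 .
\]

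By the Dedekind finiteness of $R$, it follows that $yx=1$. Expanding this product in the same way gives
\[
yx=ba+(1-e)=1 .
\]
Hence $ba=e$, and so $eRe$ is Dedekind finite. The symmetric case, a left inverse in $eRe$, is identical. Equivalently, I am using the reformulation that every one-sided invertible element is invertible, so no separate argument is needed.

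The only point needing care is checking that all cross terms vanish. This follows from $a=eae$ and $b=ebe$, which give $a(1-e)=ea e(1-e)=0$, and similarly for the other three products. I do not expect any genuine obstacle; the step to verify is this orthogonality bookkeeping. No earlier result of the paper is needed beyond the definition of Dedekind finite.
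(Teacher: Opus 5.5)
Your proof is correct: padding $a$ and $b$ with $1-e$ turns $ab=e$ in $eRe$ into $xy=1$ in $R$. The cross terms vanish because $a=eae$ and $b=ebe$, and Dedekind finiteness of $R$ then gives $ba+(1-e)=1$, i.e.\ $ba=e$.

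The paper gives no argument of its own here (it only cites the result from the literature), and yours is the standard proof of this fact. The remark about a ``symmetric case'' is unnecessary, since the definition already quantifies over all pairs $a,b$.
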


\begin{Proposition} \label{a}
If R is a quasi-nil clean abelian ring, then every nilpotent element of $R$ is contained in $J(R)$. \end{Proposition}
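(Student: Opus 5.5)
The plan is to argue by contradiction, combining semipotency (Proposition \ref{p1.11}) with the fact that abelian rings are Dedekind finite. Let $R$ be an abelian quasi-nil clean ring and let $n\in N(R)$, say $n^m=0$. Suppose, for contradiction, that $n\notin J(R)$. Then the left ideal $I=Rn$ is not contained in $J(R)$.

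\textbf{Step 1: produce a central idempotent in $Rn$.} Since $R$ is quasi-nil clean, Proposition \ref{p1.11} says $R$ is semipotent. Hence $I=Rn$ contains a nonzero idempotent $f$, and we can write $f=rn$ for some $r\in R$. Because $R$ is abelian, $f$ is central. Consequently $fR=fRf$ is a ring with identity $f$.

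\textbf{Step 2: $fn$ is one-sided invertible in $fR$.} The elements $fr$ and $fn$ lie in $fR$. Using centrality of $f$,
\[
(fr)(fn)=f^2rn=f\cdot f=f .
\]
So $fn$ has the left inverse $fr$ in $fR$.

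\textbf{Step 3: Dedekind finiteness.} Every idempotent of $fR$ is an idempotent of $R$ lying in $fR$, hence central in $R$ and therefore central in $fR$. So $fR$ is abelian. By the remark preceding Proposition \ref{p1.12}, $fR$ is then Dedekind finite (alternatively, apply Proposition \ref{p1.12} directly to the corner $fRf$). Therefore $fn$ is a genuine unit of $fR$.

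\textbf{Step 4: the contradiction.} Since $f$ is central, $(fn)^m=fn^m=0$, so $fn$ is nilpotent. A nilpotent unit in the ring $fR$ forces its identity to vanish: from $f=(fn)^{m}u^{m}$ with $u$ the inverse of $fn$, we get $f=0$. This contradicts $f\neq 0$, so $n\in J(R)$.

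The only delicate point is Step 2, where the left ideal generated by $n$ must be used correctly: $f=rn$ with $r$ on the left, and centrality of $f$ is what lets $fr$ and $fn$ be multiplied inside $fR$. Step 3 then needs care in order to upgrade the one-sided inverse to a two-sided one. Beyond that, the argument is routine.
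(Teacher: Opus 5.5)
Your proof is correct and takes essentially the same route as the paper. Semipotency gives a nonzero idempotent $f=rn\in Rn$. Centrality of $f$ makes $fr$ a left inverse of $fn$ in the corner $fR=fRf$, and Dedekind finiteness upgrades this to a two-sided inverse. Nilpotency of $fn$ then gives the contradiction. The only differences are that you check directly that $fR$ is abelian instead of citing Proposition \ref{p1.12}, and you write out the inverse and nilpotency computations that the paper leaves implicit.
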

\begin{proof}
    Suppose for the sake of contradiction, that there exists a nilpotent element $a \in R$ such that it is not contained in $J(R)$. Consider the principal left ideal $T=Ra$, which is not contained in  $J(R)$. By Proposition \ref{p1.11}, it follows that $R$ is semipotent. Thus there is a non zero idempotent $e=ra \in T$ for some $r \in R$. Since $R$ is abelian, so $R$ is Dedekind finite. Then by Proposition \ref{p1.12}, we find that $eRe$ is Dedekind finite. Therefore, $eae$ is a unit in $eRe$ with inverse $ere$. This is a contradiction, because a nilpotent element cannot be a unit. This completes the proof. \end{proof}

A ring $R$ is said to be reduced if it contains no nonzero nilpotent elements.

\begin{Corollary} \label{c1}
If $R$ is a quasi-nil clean abelian ring with $J(R)=\{0\}$, then $R$ is reduced and hence quasi-Boolean.
\end{Corollary}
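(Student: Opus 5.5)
The plan has two steps: reducedness from Proposition \ref{a}, then quasi-Booleanness from the decomposition of each element.

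\emph{Reducedness.} Take any nilpotent $n \in R$. Since $R$ is a quasi-nil clean abelian ring, Proposition \ref{a} gives $n \in J(R)$. The hypothesis $J(R)=\{0\}$ then forces $n=0$, so $N(R)=\{0\}$ and $R$ is reduced.

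\emph{Quasi-Booleanness.} Let $a \in R$. By quasi-nil cleanness, $a = q + n$ with $q^2 = kq$ for some $k \in UC(R)$ and $n \in N(R)$. Reducedness gives $n = 0$, so $a = q$ is a quasi-idempotent. Since $a$ was arbitrary, every element of $R$ is quasi-idempotent, which is exactly the definition of a quasi-Boolean ring.

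I expect no real obstacle, since all the work is done by Proposition \ref{a}. Its abelian hypothesis is exactly what supplies Dedekind finiteness of the corners $eRe$, and that is where the abelian assumption in the corollary is used. I would add a remark recording that the nilpotent part vanishes in \emph{every} quasi-nil clean decomposition in such a ring, not just in a chosen one.
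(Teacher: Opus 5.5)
Your proof is correct and follows the paper's argument: Proposition \ref{a} puts every nilpotent element in $J(R)=\{0\}$, so $R$ is reduced, and then any decomposition $a=q+n$ forces $n=0$, so $a$ is quasi-idempotent. The only difference is that you write out this last step explicitly, while the paper leaves it implicit.
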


\begin{proof}
From the above argument, all nilpotent elements of $R$ must belong to $J(R)$. This implies that $R$ is a reduced ring, having only $0$ as a nilpotent element. Hence $R$ is quasi-Boolean.
\end{proof}

\begin{Definition} (see \cite{g}) An element of $R$ is said to be $2$-good if it can be expressed as a sum of two units of $R$. The ring $R$ is called $2$-good if all of its elements are the sum of two units.\end{Definition}

\begin{Proposition}
    Let $R$ be a quasi-nil clean ring and $a \in R$ such that $aR$ contains no non zero idempotents. Then $a$ is the sum of two units in $R$, i.e. $a$ is a $2$-good element in $R$. \end{Proposition}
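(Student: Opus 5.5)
The plan is to mimic the proof of Proposition \ref{p1.11}, but to build an idempotent lying in the \emph{right} ideal $aR$ rather than in a left ideal. We apply the quasi-nil clean decomposition to $a-1$ instead of to $a$. This is because we want a unit to appear as the "leftover" term next to $ke$, and that unit will then be used to conjugate $1-e$ into $aR$.

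First, since $R$ is quasi-nil clean, we write $a-1=ke+n$ with $k\in UC(R)$, $e\in Id(R)$ and $n\in N(R)$. Setting $u=1+n$, which is a unit because $n$ is nilpotent, we get
\[
a=ke+u .
\]

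Next, we multiply on the right by $1-e$. Because $ke(1-e)=0$, this gives $a(1-e)=u(1-e)$. We then define
\[
h=u(1-e)u^{-1}=a(1-e)u^{-1}.
\]
The second expression shows that $h\in aR$, and $h$ is an idempotent since it is a conjugate of the idempotent $1-e$. By hypothesis, $aR$ contains no nonzero idempotents, so $h=0$. Since $u$ is a unit, this forces $1-e=0$, that is, $e=1$.

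Finally, with $e=1$ we have $a=k+u=k+(1+n)$. Here $k$ is a unit (indeed a central unit) and $1+n$ is a unit, so $a$ is the sum of two units and hence is $2$-good.

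The only step needing any care is choosing the right idempotent. Taking $u^{-1}(1-e)u$, as in Proposition \ref{p1.11}, lands in a left ideal. The conjugate $u(1-e)u^{-1}$, written as $a(1-e)u^{-1}$, is the one that lies in $aR$. Everything else is a direct computation.
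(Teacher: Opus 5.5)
Your proof is correct and is essentially the paper's argument. The paper also decomposes $a-1=q+n$ with $q^2=kq$ and sets $e=k^{-1}q$. It then observes that $a(1-e)(1+n)^{-1}=(1+n)(1-e)(1+n)^{-1}$ is an idempotent in $aR$, which forces $e=1$ and $a=k+(1+n)$; writing $a-1=ke+n$ directly, as you do, is just a cleaner presentation of the same computation.
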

\begin{proof}
Let $R$ be a quasi-nil clean ring and $a \in R$. We consider the element $a-1$ in $R$. Thus we can write $a-1=q+n$, where $q$ is a quasi-idempotent (i.e. $q^2=kq$  for some central unit $k$) and $n$ is nilpotent. Then $ak^{-1}q=q+(1+n)k^{-1}q$. Consequently, $a-ak^{-1}q=(1+n)(1-k^{-1}q)$ which implies that $a(1-k^{-1}q)(1+n)^{-1}=(1+n)(1-e)(1+n)^{-1}$, which is an ideampotent in $aR$. Since $aR$ contains only trivial idempotent, so $(1+n)(1-k^{-1}q)(1+n)^{-1}=0$ that implies $k^{-1}q=1 \implies q=k$. Hence $a=k+(1+n)$, sum of two units in $R$.
\end{proof}

\begin{Proposition}
Let $R$ be a quasi-nil clean ring such that $2$ is invertible in $R$. Then $R$ is a $2$-good ring. \end{Proposition}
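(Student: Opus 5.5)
Let $a \in R$. The plan is to write $a$ directly as a sum of two units by using a quasi-nil clean decomposition of a suitably rescaled element, exploiting that $2$ is invertible. The model case is $e \in Id(R)$: there $2e - 1$ satisfies $(2e-1)^2 = 1$, so it is a unit. Hence any $q = ke$ can be written as $ke = \frac{k}{2} + \frac{k}{2}(2e-1)$, where both summands are units because $k$ and $2^{-1}$ are units. The aim is to absorb the nilpotent part into one of these units.

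Step one: apply the quasi-nil clean property to $a$ itself and write $a = ke + n$ with $k \in UC(R)$, $e \in Id(R)$, $n \in N(R)$. Step two: write
\[
a = \tfrac{k}{2}(2e-1) + \Bigl(\tfrac{k}{2} + n\Bigr).
\]
The first summand is a unit, being a product of the central unit $\frac{k}{2}$ with the involution $2e-1$. The second summand equals $\frac{k}{2}\bigl(1 + 2k^{-1}n\bigr)$. Since $k$ is central, $2k^{-1}n$ is nilpotent, so $1 + 2k^{-1}n$ is a unit. Therefore $\frac{k}{2}+n$ is a unit, and $a$ is a sum of two units.

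Step three is to check the small points. First, $2$ must commute with everything, which it does because $2 = 1+1$ is central; so $\frac{k}{2} = k\cdot 2^{-1}$ is a central unit. Second, $(2e-1)^2 = 4e - 4e + 1 = 1$. Third, $(k^{-1}n)^m = k^{-m}n^m$ because $k$ is central, so $2k^{-1}n$ is indeed nilpotent.

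No real obstacle is expected. The only point that needs care is confirming that the nilpotent term can be merged with the central-unit term without a commutation hypothesis between $e$ and $n$. This works because $n$ is added only to the central piece $\frac{k}{2}$ and never interacts with $e$, so no strongness assumption is needed. An alternative route uses Proposition~\ref{1.4}: $a = (q-k) + k(1+k^{-1}n)$, which would reduce the claim to showing that a quasi-idempotent plus a unit is $2$-good. The direct decomposition above is simpler, so I would use it.
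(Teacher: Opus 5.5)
Your proof is correct and is essentially the paper's argument: the paper decomposes $\frac{a}{2}=q+n$ with $q^2=kq$ and writes $a=k(2k^{-1}q-1)+k(1+2k^{-1}n)$. This is your splitting $a=\frac{k}{2}(2e-1)+\frac{k}{2}(1+2k^{-1}n)$ with the central unit rescaled by $2$, and it uses the same two ingredients, namely the involution $2e-1$ and absorbing the nilpotent into a central unit; decomposing $a$ directly, as you do, just skips the detour through $\frac{a}{2}$.
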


\begin{proof} Let $a \in R$. Since $R$ is quasi-nil clean, we can express  $\frac{a}{2} = q + n$ for some quasi-idempotent $q$ and nilpotent $n$. This implies  $q^2 = kq$ for some central unit $k$. Rewriting $a$, we obtain $a = 2q + 2n$. Since $k$ is a central unit, so we can write the expression as follows :  $a = 2q - k + k + 2n = k(2k^{-1}q - 1) + k(1 + 2k^{-1}n)$. Since both $k(2k^{-1}q - 1)$ and $k(1 + 2k^{-1}n)$ are units in $R$, it follows that $a$ is the sum of two units in $R$. Thus $R$ is a $2$-good ring.  \end{proof}

\begin{Proposition}
  Let $R$ be a ring with only trivial idempotent. Then $R$ is quasi-nil clean if and only if $R$ is local and $R/J(R) \cong \mathbb Z_{2}$.    \end{Proposition}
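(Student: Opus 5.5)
For the forward direction, assume $R$ is quasi-nil clean with only the trivial idempotents $0,1$. Every $a\in R$ can then be written as $a=ke+n$ with $k\in UC(R)$, $n\in N(R)$ and $e\in\{0,1\}$. So either $a=n$ is nilpotent, or $a=k+n=k(1+k^{-1}n)$ is a unit. Hence $R=U(R)\cup N(R)$.

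Next I show that $R$ is local by proving that the non-units form an ideal. If $x$ is a non-unit, then $x$ is nilpotent, so $1-rx$ cannot itself be nilpotent unless $rx=1$. If $rx=1$, then $xr$ is a nonzero idempotent, hence $xr=1$, contradicting that $x$ is a non-unit. So $1-rx$ is not nilpotent, and by the dichotomy it is a unit. Therefore $x\in J(R)$, and $J(R)=N(R)$ equals the set of non-units, so $R$ is local with nil radical (consistent with Proposition \ref{j}). The quotient $\overline R=R/J(R)$ is then a division ring in which, by Example $(ii)$, every nonzero element has the form $\overline{k}$ with $\overline k$ central. Thus $\overline R$ is a field all of whose elements are images of central units of $R$.

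The remaining step, and the main obstacle, is to force $\overline R\cong\mathbb Z_2$. My first attempt would be to decompose $a-1$ for a unit $a$ and compare with the decomposition of $a$. However, for $e=1$ both decompositions read $a=k+n$ and $a-1=k'+n'$, with no constraint linking $k$ to $1$. I must flag honestly that the stated conclusion appears to fail as worded: in $\mathbb Z_3$ every nonzero element is $k\cdot 1$ with $k$ a central unit, so $\mathbb Z_3$ is quasi-nil clean with only trivial idempotents, yet $\mathbb Z_3/J(\mathbb Z_3)\cong\mathbb Z_3$. So at this point the argument can only establish that $R/J(R)$ is a field generated by images of central units; an additional hypothesis (for instance, that $R$ be nil-clean, or that $k$ can be taken to be $1$) would be needed to reach $\mathbb Z_2$.

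For the converse, assume $R$ is local with $R/J(R)\cong\mathbb Z_2$. Then every $a\in R$ satisfies $a\in J(R)$ or $a-1\in J(R)$. To conclude that $a=1\cdot e+n$ with $e\in\{0,1\}$ and $n$ nilpotent, I need $J(R)$ to be nil. This is also not automatic: the localization $\mathbb Z_{(2)}$ is local with residue field $\mathbb Z_2$ but has a non-nil radical, so it is not quasi-nil clean by Proposition \ref{j}. With $J(R)$ assumed nil, the converse follows immediately, and the same lifting argument as in Proposition \ref{p1.6} applies with $I=J(R)$.
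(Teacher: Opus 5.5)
Your diagnosis is correct: the statement is false in both directions, and your counterexamples are valid. For the forward direction, $\mathbb{Z}_3$ (indeed any field) is quasi-Boolean, hence quasi-nil clean. It has only trivial idempotents, but its residue field is $\mathbb{Z}_3$. The paper itself forces this: it asserts that $\mathbb{Z}_6$ is quasi-nil clean, and $\mathbb{Z}_3$ is a homomorphic image of $\mathbb{Z}_6$.

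The paper's proof agrees with yours up to showing that $R$ is local with $J(R)$ nil. It then breaks exactly at the obstacle you flagged. For $x\notin J(R)$ it writes $\overline{x}=\overline{1}+\overline{n}$, whereas the decomposition only gives $\overline{x}=\overline{k}+\overline{n}$ with $k$ an arbitrary central unit. So it is really proving the nil-clean version, where $k=1$.

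For the converse, the paper's proof silently assumes that $J(R)$ is nil. Your example $\mathbb{Z}_{(2)}$ shows this hypothesis cannot be dropped: it is a local domain with residue field $\mathbb{Z}_2$ and non-nil radical. With that hypothesis, your argument is the same as the paper's. The appeal to the lifting proposition is unnecessary, and that proposition also assumes $R$ is unit-central; your direct argument already suffices.

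There is one small slip in your locality argument. If $1-rx$ is nilpotent, you only get that $rx$ is a unit $u$, not that $rx=1$. Replacing $r$ by $u^{-1}r$ gives a left inverse $s$ of $x$, which is impossible for a nilpotent $x$: from $sx=1$ and $x^m=0$ you get $1=s^mx^m=0$. More simply, follow the paper: every $a$ is a unit or nilpotent, and in the latter case $1-a$ is a unit, so $R$ is local.

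Your analysis in fact yields the correct characterization. A ring with only trivial idempotents is quasi-nil clean if and only if $R$ is local, $J(R)$ is nil, and $U(R)\subseteq UC(R)+J(R)$. In the commutative case the last condition is automatic, so the residue field is unconstrained.
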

\begin{proof}
 Let $a \in R$. Then we can write $a=q+n$ for some quasi idempotent $q$ and nilpotent $n$. Since $R$ contains only trivial idempotent, either $q=0$ or $q$ is a central unit. Therefore, either $a$ is a unit or $a-1$ is a unit. So $R$ is a local ring. Again by Proposition \ref{j}, $J(R)$ is nil. Therefore, $R/J(R)$ is a quasi-nil clean division ring. Let $x \notin J(R)$. Then $\overline{x}=\overline{1}+\overline{n}$ for some nilpotnt $\overline{n} \in R/J(R)$ which implies that $n^k \in J(R)$. Then $n^l=0$ which implies that $n \in J(R)$. Therefore, $\overline{x}=\overline{1}$ and so $R/J(R) \cong \mathbb Z_2$.

Conversely, suppose that $R$ is a local ring with $J(R)$ is nil and $R/J(R) \cong \mathbb Z_{2}$. Then for any $a\in R$, either $a \in J(R)$ or $a-1 \in J(R)$. If $a\in J(R)$, then $a$ is nilpotent. Hence it is quasi-nil clean. If $a-1 \in J(R)$, then $a-1$ is nilpotent. Thus $a=1+(a-1)$ is a quasi-nil clean decomposition. Hence $R$ is a quasi-nil clean ring. \end{proof}

\begin{Theorem}\label{thm1.19}
Let $R$ be a unit central ring. Then $R$ is a strongly quasi-nil clean ring if and only if $N(R)$ is an ideal of $R$ and the quotient ring $R/N(R)$ is a strongly quasi-nil clean ring.
\end{Theorem}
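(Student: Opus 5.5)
The approach is to use the hypothesis that $R$ is unit central to show two things: every nilpotent element of $R$ is central, and $N(R)$ is automatically an ideal. With that in hand, the forward direction is immediate, and the converse becomes a lifting argument modulo a nil ideal, much like Proposition \ref{p1.6}.

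\textbf{Preliminary facts.} For $n \in N(R)$, the element $1+n$ is a unit, hence central, so $n=(1+n)-1 \in C(R)$. Thus $N(R)\subseteq C(R)$.

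Given $n,m \in N(R)$, they commute, so the binomial expansion shows $n-m$ is nilpotent. For $r\in R$ we have $rn=nr$, so $(rn)^t=r^tn^t$, which vanishes for large $t$; the same holds for $nr$. Hence $N(R)$ is a two-sided ideal, and in fact a nil ideal.

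Similarly, for $e\in Id(R)$ we have $(1-2e)^2=1$. So $1-2e$ is a unit and therefore central, which gives $2e \in C(R)$. I will not need that $R$ is abelian, but it is available if wanted.

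\textbf{Forward direction.} If $R$ is strongly quasi-nil clean, the preliminary step shows $N(R)$ is an ideal. Then $R/N(R)$ is strongly quasi-nil clean as a homomorphic image, by Example (ii).

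\textbf{Converse.} Assume $R/N(R)$ is strongly quasi-nil clean. First note that $R/N(R)$ is reduced: if $\overline{x}^m=\overline 0$, then $x^m\in N(R)$ is nilpotent, so $x$ is nilpotent. Hence every decomposition in $R/N(R)$ has the form $\overline r=\overline k\,\overline e$ with $\overline k$ a central unit and $\overline e$ an idempotent.

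Since $N(R)$ is a nil ideal, two lifts are available:
\begin{enumerate}[(a)]
\item The idempotent $\overline e$ lifts to an idempotent $e\in R$.
\item The unit $\overline k$ lifts to a unit $k\in R$. Indeed, if $\overline k\,\overline{k'}=\overline 1$, then $kk'$ and $k'k$ lie in $1+N(R)\subseteq U(R)$, so $k$ is a unit.
\end{enumerate}
Because $R$ is unit central, $k\in UC(R)$ automatically. Set $n=r-ke\in N(R)$. Since $n$ is central, $n$ commutes with $ke$. Therefore $r=ke+n$ is a strongly quasi-nil clean decomposition of $r$ in $R$.

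\textbf{Main obstacle.} The only nonroutine point is seeing that unit-centrality forces $N(R)\subseteq C(R)$, via $n=(1+n)-1$. This single observation gives both that $N(R)$ is an ideal and the commutativity needed for strongness. The lifting of idempotents and units modulo a nil ideal is standard, exactly as used in Proposition \ref{p1.6}.
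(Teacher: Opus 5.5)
Your argument is correct, and for the key step it takes a genuinely different and much shorter route than the paper.

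The paper extracts from unit-centrality only that $N(R)$ is commutative. It then proves $N(R)$ is an ideal by a computation with a quasi-idempotent $q$, $q^2=kq$:
\begin{enumerate}[(1)]
\item it shows $(xq-k^{-1}qxq)^2=0$;
\item it obtains $qx^{2^n}q=k^{-2^n+2}(qxq)^{2^n}$, concluding that $qxq$, and hence $xq$ and $qx$, are nilpotent;
\item it uses a decomposition $a=q'+n$ to get $ax,xa\in N(R)$.
\end{enumerate}
So the paper's ideal argument depends on the clean decomposition. For the converse it just cites Proposition \ref{p1.6}.

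Your observation $n=(1+n)-1\in C(R)$ gives $N(R)\subseteq C(R)$ directly. Hence $N(R)$ is an ideal in every unit central ring, so the ideal hypothesis in the theorem is actually redundant. The same centrality supplies the commutation $n\cdot ke=ke\cdot n$ needed for \emph{strong} quasi-nil cleanness in the converse. The paper leaves this point unaddressed, since Proposition \ref{p1.6} concerns only plain quasi-nil cleanness.

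What the paper's computation buys in principle is that it uses only pairwise commutativity of nilpotents. It would therefore show $N(R)$ is an ideal in any quasi-nil clean ring whose nilpotents commute. For the theorem as stated, your route is cleaner and more complete.

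One aside: your remark that $2e\in C(R)$ makes abelianness available is not justified as written, since $2e$ central does not force $e$ central when $2\notin U(R)$. The correct argument is that $er(1-e)$ is nilpotent, hence central, which forces $er(1-e)=e\,er(1-e)=er(1-e)\,e=0$. You never use this, so the proof is unaffected.
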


\begin{proof}
 Since $R$ is unit central, the set of nilpotents $N(R)$ is commutative.

Let $q\in R$ be a quasi-idempotent. Then $q^2=kq$ for some central unit $k \in R$.  Let $x \in N(R)$. Consider $(xq - k^{-1}xq)^2 = 0$. This shows that $xq - k^{-1}xq^2 \in N(R)$. Since $N(R)$ is commutative, we have $x(xq - k^{-1}xq^2) = (xq - k^{-1}xq^2)x$. This implies that $x^2q - k^{-1}x^2q^2 = xqx - k^{-1}xq^2x$ $\implies qx^2q - k^{-1}qxq^2q = qxqx - qxqx = 0$. Now by using the quasi-idempotent identity $q^2=kq$, we find that $qx^2q = k^{-1}qxqxq=k^{-2}qxkqxq=k^{-2}qxq^2xq=k^{-2}(qxq)^2$. Similarly, doing in the same process, we have $qx^{2^n}q = k^{-2^n + 2}(qxq)^{2^n}$ for all $n \in \mathbb{N}$.
This implies that $qxq \in N(R)$. Since $N(R)$ is closed under addition and products with elements of $R$, it follows that $xq = (xq - k^{-1}qxq) + k^{-1}qxq \in N(R)$. Similarly, $qx \in N(R)$. Now let $a \in R$. Then $a = q' + n$, where $q'$ is a quasi-idempotent and $n \in N(R)$. Then for any $x \in N(R)$, $ax = qx + nx \in N(R)$ and $xa = xq + xn \in N(R)$. Therefore, $N(R)$ is an ideal of $R$.
Moreover, since $R$ is strongly quasi-nil clean and $N(R)$ is an ideal, it follows that $R/N(R)$ is strongly quasi-nil clean.

The reverse part follows directly from Proposition \ref{p1.6}. \end{proof}

An element $a$ in $R$ is said to be unipotent if $1-a$ is nilpotent. A ring $R$ is said to be $UU$ if all its units are unipotent, i.e. $U(R) \subseteq 1 + N(R)$ implies that $1+N(R)=U(R)$.

\begin{Theorem} \label{thm2.20}
A ring $R$ is strongly nil clean if and only if the following conditions hold :

$(i)$ $R$ is a strongly quasi-nil clean ring

$(ii)$ $R$ is a $UU$-ring

$(iii)$ $N(R)$ forms an ideal of $R$.

\end{Theorem}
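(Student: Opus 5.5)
For the forward direction I would work from the definitions, with one cited structural result. Suppose $R$ is strongly nil clean.

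\emph{Condition $(i)$.} Every idempotent $e$ is a quasi-idempotent with $k=1$. So any strongly nil clean decomposition $a=e+n$ with $en=ne$ is already a strongly quasi-nil clean decomposition, and $(i)$ holds.

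\emph{Condition $(ii)$.} Take $u\in U(R)$ and write $u=e+n$ with $e^2=e$, $n\in N(R)$ and $en=ne$. Since $n$ commutes with $u$, so does $u^{-1}$, and $u^{-1}n$ is nilpotent. Hence $e=u-n=u(1-u^{-1}n)$ is a unit. An idempotent unit equals $1$, so $u=1+n\in 1+N(R)$. Thus $R$ is a $UU$-ring.

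\emph{Condition $(iii)$.} Here I would invoke the known characterization of strongly nil clean rings (Ko\c{s}an--Wang--Zhou, consistent with \cite[Theorem 2.7]{nc} as used in the Example above): $J(R)$ is nil and $R/J(R)$ is Boolean. Since $R/J(R)$ is Boolean, it is reduced, so every nilpotent of $R$ maps to $0$ and lies in $J(R)$. Conversely $J(R)\subseteq N(R)$ because $J(R)$ is nil. Hence $N(R)=J(R)$ is an ideal. This citation is the one step I expect to need outside input. If a self-contained argument is preferred, one can try to mimic Proposition~\ref{a} using semipotency (Proposition~\ref{p1.11}), but that proposition assumes the ring is abelian, so citing the structure theorem is cleaner.

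\emph{Converse.} Assume $(i)$ and $(ii)$; I expect $(iii)$ will not actually be needed. Let $a\in R$ and write $a=ke+n$, where $k\in UC(R)$, $e\in Id(R)$, $n\in N(R)$ and $(ke)n=n(ke)$. Since $k$ is a central unit, $en=ne$. By the $UU$ property $k=1+m$ with $m$ nilpotent, and $m=k-1$ is central. Then
\[
a=e+(me+n).
\]
Here $me$ is nilpotent, because $m$ is central and nilpotent, and $me$ commutes with $n$, because $m$ is central and $en=ne$. A sum of two commuting nilpotents is nilpotent, so $me+n\in N(R)$. Moreover $e$ commutes with $me+n$. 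Hence $a$ is strongly nil clean, and so $R$ is strongly nil clean.
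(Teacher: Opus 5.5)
Your proof is correct, and your converse takes a different and stronger route than the paper's. In the forward direction the paper cites \cite[Theorem 3]{h2} for all three conditions. You verify $(i)$ and $(ii)$ by hand and need outside input only for $(iii)$, which comes to the same thing.

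For the converse, the paper makes the same rewrite $a=k^{-1}q+(xk^{-1}q+n)$ with $k=1+x$. It then uses hypothesis $(iii)$ to conclude that $xk^{-1}q+n$ is nilpotent. Finally it checks $a-a^2\in N(R)$ and appeals to \cite[Lemma 2.1]{c1}.

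You instead use the fact that $m=k-1$ is central. This makes $me$ nilpotent and commuting with $n$, so you can exhibit the commuting decomposition $a=e+(me+n)$ directly. That avoids both the ideal hypothesis and the external lemma. It also shows that $(iii)$ is redundant: $(i)$ and $(ii)$ already force $R$ to be strongly nil clean, and hence, by the forward direction, force $N(R)$ to be an ideal.

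Because your arguments for $(i)$, $(ii)$ and the converse never use unit-centrality, the unit-central assumption in the Corollary that follows this theorem can be dropped. The paper's route has only the minor merit of reducing to a standard criterion, and as written it genuinely depends on $(iii)$.
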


\begin{proof}
First suppose that $R$ is a strongly nil clean ring. Then from \cite[Theorem 3]{h2}, conditions $(i)$, $(ii)$ and $(iii)$ follow.

Conversely, assume that $R$ is a strongly quasi-nil clean ring as well as a $UU$-ring and $N(R)$ forms an ideal of $R$. Let $a \in R$. Then  we can write $a = q + n$, where $q \in R$ is a quasi-idempotent and $n \in N(R)$. Thus $q^2 = kq$ for some central unit $k \in R$. Since $R$ is a $UU$-ring, $k = 1 + x$, where $x \in N(R)$. Then $a= kk^{-1}q + n= (1 + x)k^{-1}q + n= k^{-1}q + xk^{-1}q + n= k^{-1}q + d$, where $d = xk^{-1}q + n \in N(R)$ as $N(R)$ is an ideal. Hence
$a - a^2=k^{-1}q + d - k^{-1}q + k^{-1}qd + dk^{-1}q + d^2 = d - k^{-1}qd - k^{-1}dq - d^2 \in N(R)$. Therefore, by \cite[Lemma 2.1]{c1}, $R$ is a strongly nil clean ring.\end{proof}

From the above results, we can conclude the following result :

\begin{Corollary}
 Let $R$ be a unit central ring. Then $R$ is a strongly nil clean ring if and only if $R$ is a strongly quasi-nil clean ring as well as $R$ is a  $UU$-ring.\end{Corollary}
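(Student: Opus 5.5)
The corollary should follow by combining Theorem \ref{thm2.20} with Theorem \ref{thm1.19}. The only extra input needed is that, in a unit central ring, the ideal condition on $N(R)$ is automatic once $R$ is strongly quasi-nil clean.

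For the forward direction, assume $R$ is strongly nil clean. Theorem \ref{thm2.20} then gives at once that $R$ is strongly quasi-nil clean and a $UU$-ring; unit centrality plays no role here.

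For the converse, assume $R$ is unit central, strongly quasi-nil clean and $UU$. I would invoke Theorem \ref{thm1.19}. Its forward implication says that for a unit central, strongly quasi-nil clean ring, $N(R)$ is an ideal of $R$. So conditions $(i)$, $(ii)$ and $(iii)$ of Theorem \ref{thm2.20} all hold, and $R$ is strongly nil clean.

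An alternative route avoids Theorem \ref{thm1.19}. Since $R$ is $UU$, every unit has the form $1+n$ with $n$ nilpotent. Unit centrality then puts $1+N(R)$ in the center, so $N(R)\subseteq C(R)$. A sum of commuting nilpotents is nilpotent, and a central nilpotent times any element is nilpotent, so $N(R)$ is an ideal directly.

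I expect no real obstacle. The only point needing care is making sure that the ideal condition $(iii)$ is genuinely supplied by Theorem \ref{thm1.19} under the unit central hypothesis. The $UU$ argument above gives an independent check of that step.
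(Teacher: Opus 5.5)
Your proposal is correct and follows the paper's own route. The paper also obtains the corollary by combining Theorem \ref{thm2.20} with the forward part of Theorem \ref{thm1.19}, which supplies condition $(iii)$ when $R$ is unit central.

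Your alternative check of $(iii)$ is sound as well, and it does not even need the $UU$ hypothesis. In every ring $1+N(R)\subseteq U(R)$, so unit centrality alone forces $N(R)\subseteq C(R)$, which already makes $N(R)$ an ideal.
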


\section{Commutative Quasi-nil Clean Rings}
\begin{Theorem}
Every localization of a commutative quasi-nil clean ring is quasi-nil clean. \end{Theorem}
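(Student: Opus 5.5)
Let $R$ be a commutative quasi-nil clean ring, $S\subseteq R$ a multiplicatively closed set (we may assume $1\in S$ and $0\notin S$, otherwise $S^{-1}R=0$ is trivially quasi-nil clean), and consider an arbitrary element $a/s\in S^{-1}R$. The plan is to decompose numerator and denominator separately and then recombine. Write $a=q+n$ with $q^2=kq$, $k\in U(R)$ (central automatically), and $n\in N(R)$. Separately, write $s=q'+n'$ with $q'^2=k'q'$, $k'\in U(R)$, $n'\in N(R)$. Since everything is commutative, we may write $q=ke$ and $q'=k'f$ with $e,f\in Id(R)$.

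The key observation concerns the image of $s$ in $S^{-1}R$, which is a unit. In $S^{-1}R$ we have $s/1=k'f/1+n'/1$, so $k'f/1=s/1-n'/1$ is a unit minus a nilpotent in a commutative ring, hence a unit. Then $f/1$ is an idempotent unit, so $f/1=1$. Consequently $s/1=k'/1+n'/1=(k'/1)(1+k'^{-1}n'/1)$, and $(s/1)^{-1}=(k'/1)^{-1}(1+k'^{-1}n'/1)^{-1}$. The inverse of $1+\text{nilpotent}$ has the form $1+m$ with $m$ nilpotent, namely a finite geometric series. Hence $1/s=k'^{-1}/1+m'$ for some nilpotent $m'\in S^{-1}R$.

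We then compute
\[
\frac{a}{s}=\Bigl(\frac{ke}{1}+\frac{n}{1}\Bigr)\Bigl(\frac{k'^{-1}}{1}+m'\Bigr)=\frac{kk'^{-1}}{1}\cdot\frac{e}{1}+\Bigl(\frac{n k'^{-1}}{1}+\frac{a}{1}m'\Bigr).
\]
Here $kk'^{-1}/1$ is a unit of $S^{-1}R$ (central by commutativity) and $e/1$ is an idempotent, so the first term is a quasi-idempotent. The bracketed term is a sum of nilpotents in a commutative ring, hence nilpotent. This yields the required quasi-nil clean decomposition of $a/s$.

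The main obstacle is controlling the denominator $s$: a general element of $S$ need not be a unit in $R$. Decomposing $s$ itself and showing that its idempotent part becomes $1$ after localization handles this. The only facts needed are that a unit minus a nilpotent is a unit and that the only idempotent unit is $1$, both valid in commutative rings. Commutativity is also used throughout so that sums of nilpotents remain nilpotent.
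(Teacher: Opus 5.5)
Your proof is correct, but it takes a longer route than the paper's. The paper never decomposes $s$. It writes $a/s = q/s + n/s$ and notes that $(q/s)^2 = kq/s^2 = (k/s)(q/s)$. Here $k/s = (k/1)(1/s)$ is a unit of $S^{-1}R$ with inverse $k^{-1}s/1$, so $q/s$ is already a quasi-idempotent and $n/s$ is nilpotent. So the denominator you flag as the main obstacle is no obstacle at all. For any central unit $u$ we have $(uq)^2 = (uk)(uq)$, so multiplying a quasi-idempotent by a central unit keeps it quasi-idempotent. Moreover $1/s$ is a unit of $S^{-1}R$ whether or not $s$ is a unit of $R$. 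Absorbing units into the coefficient in this way is exactly the flexibility quasi-idempotents have over idempotents.

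Your argument instead uses the quasi-nil clean decomposition of $s$ itself, shows that its idempotent part maps to $1$, and so writes $1/s = k'^{-1}/1 + m'$ with $m'$ nilpotent. This needs the extra decomposition of $s$, but it gives two things in return:
\begin{itemize}
\item A slightly sharper conclusion: the quasi-idempotent part of $a/s$ can be chosen as $kk'^{-1}e/1$, the image of a quasi-idempotent of $R$, rather than $q/s$.
\item A method that does not rely on absorbing $1/s$ into the coefficient. Taking $k = k' = 1$ throughout, the same computation shows that localizations of commutative nil-clean rings are nil-clean. The paper's one-line trick does not work there, since $e/s$ need not be idempotent (e.g.\ $1/3 = 3$ in $\mathbb{Z}_4$).
\end{itemize}
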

\begin{proof}
Let $R$ be a commutative quasi-nil clean ring and $S$ be a multiplicative closed subset of $R$. Let $x \in S^{-1}R$. Then $x=\frac{a}{s}$ for some $a \in R$ and $s \in S$. As $R$ is quasi-nil clean, we can write $a=q+n$, where $q$ is quasi-idempotent (i.e. $q^2 = kq$ for some central unit $k \in R$) and $n$ is nilpotent. Now consider an element $a/s \in S^{-1}R$. We express :
\[
    \frac{a}{s} = \frac{q + n}{s} = \frac{q}{s} + \frac{n}{s}.
\]
Since localization preserves nilpotency, $\frac{n}{s}$ remains nilpotent in $S^{-1}R$. It remains to show that $q/s$ is quasi-idempotent. Now $(\frac{q}{s})^2=\frac{k}{s}\frac{q}{s}$, where $\frac{k}{s}$ is a central unit in $S^{-1}R$. Hence $S^{-1}R$ is quasi-nil clean.   \end{proof}

\begin{Proposition}
In a commutative ring, every strongly $\pi$-regular ring is quasi-nil clean. \end{Proposition}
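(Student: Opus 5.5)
The plan is to use the Fitting-type decomposition available for strongly $\pi$-regular elements, together with one observation: in a commutative ring every strongly regular element is quasi-idempotent, because every unit is central. Concretely, fix $a \in R$. Since $R$ is commutative and strongly $\pi$-regular, there exist $n \geq 1$ and $b \in R$ with $a^n = a^{n+1}b$. Iterating this relation, and using commutativity throughout, gives $a^n = a^{n+m}b^m$ for all $m\ge 1$, in particular $a^n = a^{2n}b^n$.

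Next I define $e = a^n b^n$. Then
\[
e^2 = a^{2n}b^{n}\,b^n = a^n b^n = e,
\]
so $e$ is an idempotent. I then split $a = ae + a(1-e)$ and check the two pieces separately.

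\emph{The nilpotent part.} We have
\[
(a(1-e))^n = a^n(1-e) = a^n - a^{2n}b^n = 0,
\]
so $a(1-e)$ is nilpotent.

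\emph{The quasi-idempotent part.} Put $k = ae + (1-e)$, which lies in the centre since $R$ is commutative. I claim $k$ is a unit with inverse $a^{n-1}b^n + (1-e)$ (when $n=1$ read $a^{0}=1$). The cross terms vanish because $e(1-e)=0$ and $a^{n-1}b^n = a^{n-1}b^n e$; the latter holds since $a^{n-1}b^n e = a^{2n-1}b^{2n} = a^{n-1}b^n$, using $a^n = a^{2n}b^n$. The diagonal term gives $ae \cdot a^{n-1}b^n = a^n b^n e = e$. Adding the two summands yields $e + (1-e) = 1$. Finally,
\[
k\,(ae) = (ae + 1 - e)\,ae = a^2 e = (ae)^2,
\]
so $ae$ is quasi-idempotent with central unit $k$. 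Hence $a = ae + a(1-e)$ is a quasi-nil clean decomposition, and these two pieces even commute.

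The only step needing care is the verification that $k$ is invertible. This amounts to showing that $ae$ is a unit in the corner ring $eR$, with inverse $a^{n-1}b^n$. It depends on first normalizing the relation $a^n = a^{n+1}b$ into $a^n = a^{2n}b^n$. Everything else is a routine computation in a commutative ring.
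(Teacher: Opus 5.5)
The step that fails is exactly the one you flagged as needing care. The identity $a^{n-1}b^n=a^{n-1}b^ne$ is false in general, so $a^{n-1}b^n+(1-e)$ is not an inverse of $k=ae+(1-e)$.

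The normalized relation $a^n=a^{2n}b^n$ is equivalent to $a^ne=a^n$. So it only lets you absorb a factor $e$ into something carrying at least $n$ factors of $a$. The element $a^{n-1}b^n$ carries only $n-1$, and $b$ is an arbitrary solution of $a^n=a^{n+1}b$, so nothing forces $a^{n-1}b^n\in eR$.

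Concretely, in any nonzero commutative ring take $a=0$, $n=1$, $b=1$. Then $a^n=a^{n+1}b$ holds, $e=ab=0$ and $k=1$. Yet $a^{n-1}b^n=1\neq 0=a^{n-1}b^ne$, and your proposed inverse is $1+1=2$, with $k\cdot 2=2\neq 1$. The same happens for any $a\neq 0$ with $a^n=0\neq a^{n-1}$ and $b=1$.

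The repair is immediate: project the candidate inverse into $eR$ and take $c=a^{n-1}b^ne+(1-e)=a^{2n-1}b^{2n}+(1-e)$. Then $ae\cdot a^{n-1}b^ne=a^nb^ne=e$, both cross terms contain $e(1-e)=0$, and $kc=e+(1-e)=1$. So $ae$ is a unit of the corner ring $eR$ with inverse $a^{n-1}b^ne$, not $a^{n-1}b^n$.

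With this correction the rest of your argument goes through: $e^2=e$, $(a(1-e))^n=0$, and $(ae)^2=k\,(ae)$ are all correct. You then have a self-contained proof.

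The paper instead quotes Diesl's result that a strongly $\pi$-regular $a$ can be written $a=e+u$ with $u$ a unit and $eae$ nilpotent. It then writes $a=u(1-e)+eae$ and uses the central unit $u$ itself as the constant for the quasi-idempotent $u(1-e)$. This is the same Fitting-type splitting, with your $e$ playing the role of the paper's $1-e$. The difference is that invertibility of the constant comes for free from the citation rather than being checked by hand, which is exactly where your computation went wrong.
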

\begin{proof}
    Let $R$ be a commutative strongly $\pi$-regular ring and $a \in R $. As shown in \cite[Proposition 2.5]{d}, there exists an idempotent $e \in R$ and a unit $u \in R$ such that $a = e + u$ and $eae$ is nilpotent. Then we can write $a=(1-e)a+ae=u(1-e)+eae$, where $u(1 - e)$ is quasi-idempotent and $eae$ is nilpotent. Hence $a$ is a sum of a quasi-idempotent and a nilpotent element, in $R$. Thus $R$ is a quasi-nil clean ring. \end{proof}

\begin{Proposition} \label{q}
Let $R$ be a commutative ring and $I$ be a nil ideal of $R$. Then $R$ is a quasi-nil clean ring if and only if $R/I$ is a quasi-nil clean ring. \end{Proposition}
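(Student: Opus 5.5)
A commutative ring is unit central, so the statement is a special case of Proposition \ref{p1.6}. To keep the argument self-contained in the commutative setting, I would nevertheless reprove both directions directly.

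\emph{Forward direction.} This is immediate from Example (ii): homomorphic images of quasi-nil clean rings are quasi-nil clean. It is worth recording why. Under the projection $\pi\colon R\to R/I$, idempotents go to idempotents, units go to units, and nilpotents go to nilpotents. Commutativity makes centrality automatic, so a decomposition $a=ke+n$ maps to a decomposition of the same shape in $R/I$.

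\emph{Converse.} Let $r\in R$ and write $\overline r=\overline k\,\overline e+\overline n$ in $R/I$. Here $\overline k$ is a unit, $\overline e^2=\overline e$, and $\overline n^m=\overline 0$. The plan has three steps.
\begin{enumerate}[(a)]
\item \emph{Lift the idempotent.} Since $I$ is nil, idempotents lift modulo $I$ (the standard lifting for nil ideals). So I may choose $e\in R$ with $e^2=e$ and $e+I=\overline e$.
\item \emph{Lift the unit.} Choose any preimage $k$ of $\overline k$, and let $k'$ be a preimage of $\overline k^{-1}$. Then $kk'=1+i$ with $i\in I$. Because $i$ is nilpotent, $1+i$ is a unit, and hence so is $k$. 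In the commutative ring $R$, $k$ is then a central unit, and $q=ke$ satisfies $q^2=kq$.
\item \emph{Show the remainder is nilpotent.} Set $n=r-ke$. Then $n+I=\overline n$, so $n^m\in I$. Since $I$ is nil, $(n^m)^t=0$ for some $t$, which makes $n$ nilpotent.
\end{enumerate}
Putting these together, $r=ke+n$ is a quasi-nil clean decomposition in $R$.

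The only nontrivial ingredient is the lifting of idempotents modulo a nil ideal, which I will cite as a standard fact. In the commutative case it can also be done by hand. Take any preimage $x$ of $\overline e$, so $x-x^2\in I$ is nilpotent, and then apply the usual polynomial construction to produce a genuine idempotent $e\in\mathbb Z[x]$ with $e\equiv x \pmod I$. I expect this lifting step to be the main obstacle; everything else is routine.
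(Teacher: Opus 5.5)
Your proof is correct and follows the paper's route. The paper observes that the statement follows directly from Proposition~\ref{p1.6}, since commutative rings are unit central. Your self-contained argument (lift the idempotent and the unit modulo the nil ideal $I$, then take $n=r-ke$ as a lift of $\overline n$) is that proposition's proof specialized to the commutative case, with the unit-lifting and centrality details filled in.
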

\begin{proof}
This result directly follows from Proposition \ref{p1.6}
\end{proof}

\begin{Theorem}
Let $R$ be a commutative ring. Then $R$ is a quasi-nil clean ring if and only if $R/J(R)$ is a quasi-Boolean ring and $J(R)$ is nil. \end{Theorem}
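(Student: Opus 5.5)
For the forward direction, let $R$ be commutative and quasi-nil clean. Proposition \ref{j} gives that $J(R)$ is nil. Example (ii) shows that $R/J(R)$ is quasi-nil clean, being a homomorphic image of $R$. A commutative ring is abelian, so Corollary \ref{c1} applies to $R/J(R)$ once we know $J(R/J(R))=0$, which is standard. Hence $R/J(R)$ is reduced and therefore quasi-Boolean. To avoid relying on the wording of Corollary \ref{c1}, we can say this directly. In a commutative ring the nilpotents form an ideal contained in every prime ideal, so $N(R)\subseteq J(R)$ and $N(R/J(R))\subseteq J(R/J(R))=0$. So in a decomposition $\bar a=\bar q+\bar n$ in $R/J(R)$ we must have $\bar n=0$, and $\bar a=\bar q$ is quasi-idempotent.

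For the converse, assume $J(R)$ is nil and $R/J(R)$ is quasi-Boolean. Every quasi-idempotent $\bar q$ is trivially quasi-nil clean, as $\bar q+0$, so $R/J(R)$ is quasi-nil clean. Since $J(R)$ is a nil ideal and $R$ is commutative, Proposition \ref{q} (equivalently Proposition \ref{p1.6}, a commutative ring being unit central) lifts this to $R$. So $R$ is quasi-nil clean.

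The only step needing care is the lifting inside Proposition \ref{p1.6}. Write $\bar a=\bar k\bar e$ with $\bar k$ a unit and $\bar e$ idempotent in $R/J(R)$. We need an idempotent $e\in R$ lifting $\bar e$, which exists because $J(R)$ is nil, and a unit $k\in R$ lifting $\bar k$. Any preimage of a unit modulo $J(R)$ is a unit, and it is central since $R$ is commutative. Then $a-ke\in J(R)$ is nilpotent, which gives the decomposition $a=ke+(a-ke)$. I expect no real obstacle beyond checking that this lifting is legitimate.
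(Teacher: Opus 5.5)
Your proof is correct and follows the same route as the paper: Proposition~\ref{j} gives that $J(R)$ is nil, reducedness of $R/J(R)$ gives the quasi-Boolean conclusion, and the converse comes from Proposition~\ref{q} (the lifting in Proposition~\ref{p1.6}). Your observation that $N(R/J(R))\subseteq J(R/J(R))=0$, because nilradical $\subseteq$ Jacobson radical in a commutative ring, is a slightly more elementary substitute for the paper's use of Proposition~\ref{a} and Corollary~\ref{c1}, which goes through semipotency and Dedekind finiteness.
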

\begin{proof}
   Suppose that $R$ is a quasi-nil clean ring. Therefore, by Proposition \ref{j}, $J(R)$ is nil.  Again by Proposition \ref{a} and Corollary \ref{c1}, it follows that $R/J(R)$ is a quasi-Boolean ring. 
   
   On the other hand, $J(R)$ is nil and $R/J(R)$ is a quasi-Boolean ring. Hence it is quasi-nil clean. Therefore, $R$ is a quasi-nil clean ring, by Proposition \ref{q}. \end{proof}

\begin{Proposition}
    Let $R$ be a commutative $UU$-ring. Then the following conditions are equivalent : 

    $(i)$ $R$ is a quasi-nil clean ring.

    $(ii)$ $R$ is a quasi-clean ring.

    $(iii)$ $R$ is a clean ring.

    $(iv)$ $R$ is a nil clean ring. \end{Proposition}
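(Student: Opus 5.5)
I would prove the cycle $(iv)\Rightarrow(i)\Rightarrow(ii)\Rightarrow(iii)\Rightarrow(iv)$. The first two implications are already available. A nil clean decomposition $a=e+n$ is a quasi-nil clean decomposition with $k=1$, so $(iv)\Rightarrow(i)$ holds. Proposition \ref{1.4} (every quasi-nil clean element is quasi-clean) gives $(i)\Rightarrow(ii)$. The real content lies in the last two implications, and the plan is to use the $UU$ hypothesis $U(R)=1+N(R)$ to absorb the central unit $k$ of a quasi-idempotent.

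For $(ii)\Rightarrow(iii)$: let $a\in R$ and write $a=q+u$ with $q^2=kq$, $k\in U(R)$ and $u\in U(R)$. Since $R$ is commutative, $q=ke$ where $e=k^{-1}q$ is idempotent. By the $UU$ property, $k=1+x$ with $x\in N(R)$. Then
\[
a=e+xe+u .
\]
Because $R$ is commutative, $xe$ is nilpotent and commutes with the unit $u$, so $xe+u$ is a unit. Hence $a=e+(xe+u)$ is a clean decomposition.

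For $(iii)\Rightarrow(iv)$: write $a=e+u$ with $e$ idempotent and $u$ a unit. The $UU$ hypothesis gives $u=1+n$ with $n\in N(R)$. Then
\[
a=(1-e)+(2e+n)\quad\text{or}\quad a=e+1+n .
\]
The natural route is to show that $2\in N(R)$. In a $UU$-ring we have $-1=1+m$ with $m$ nilpotent, so $-2=m$ is nilpotent, and hence $2\in N(R)$. With this, $2e+n$ is a sum of commuting nilpotents, and therefore nilpotent. So $a=(1-e)+(2e+n)$ is a nil clean decomposition.

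The only delicate points are the two uses of commutativity: that a sum of commuting nilpotents is nilpotent, and that a unit plus a commuting nilpotent is a unit. I expect the step $2\in N(R)$ to be the one needing explicit mention. Everything else is routine.
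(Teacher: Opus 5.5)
Your proof is correct, and its overall shape matches the paper's: the same cycle $(iv)\Rightarrow(i)\Rightarrow(ii)\Rightarrow(iii)\Rightarrow(iv)$, with $U(R)=1+N(R)$ used to absorb units. Your $(ii)\Rightarrow(iii)$ is exactly the paper's argument, since $u(1+u^{-1}xe)=u+xe$. The differences are in the other two steps.

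For $(i)\Rightarrow(ii)$ you invoke the earlier proposition, via $a=(q-k)+k(1+k^{-1}n)$. The paper instead decomposes the shifted element $a-1=q+n$ and reads off $a=q+(1+n)$. Neither argument actually needs the $UU$ hypothesis, since $1+n$ is a unit in any ring.

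For $(iii)\Rightarrow(iv)$ the paper shifts again. It applies cleanness to $a+1$, getting $a+1=e+(1+n)$ and hence $a=e+n$ at once, so the fact $2\in N(R)$ is never needed. You apply cleanness to $a$ itself, so you have to absorb the extra $1$ by rewriting $a=(1-e)+(2e+n)$; that is why you need $2\in N(R)$. Your derivation of that fact ($-1\in U(R)=1+N(R)$ forces $-2$ to be nilpotent) is correct and is a standard property of $UU$-rings. So your argument is complete, just one step longer than the paper's shift trick requires.
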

\begin{proof}
 $(i) \implies (ii)$. Let $R$ be a quasi-nil clean ring. Then for any $a \in R$, we can write $a-1=q+n$, where $q$ is a quasi idempotent and $n$ is a nilpotent in $R$. This implies that $a=q+(1+n)$. Since $R$ is a $UU$-ring, so $1+n$ is a unit in $R$. Hence $R$ is a quasi clean ring.

 $(ii) \implies (iii)$. For any $a \in R$, we have $a=ke + u$ for some central unit $k$, unit $u$ and idempotent $e$ in $R$. Then $a=ke+u=(1+n)e+u$ for some nilpotent $n$ in $R$. Therefore, $a=e+u(1+u^{-1}ne)$. This shows that $R$ is a clean ring.

 $(iii) \implies (iv)$. For any $a \in R$, there is an idempotent $e$ and a  unit $u$ in $R$ such that $a+1=u+e=1+n+e$, where $n$ is a nilpotent in $R$. Therefore, $a=n+e$ and hence $R$ is a nil clean ring.

 $(iv) \implies (i)$ is straightforward.
\end{proof}

Let $(A,B)$ be a pair of rings, $f : A \longrightarrow B$ be a ring homomorphism and $K$ be an ideal of the ring $B$. In this context, we can consider the subring $A \bowtie^f K=\{(a,f(a)+k) : a \in A, k\in K\}$ of $A \times B$. This subring is termed the amalgamation of $A$ with $B$ along the ideal $K$ with respect to $f$, introduced and studied by D'Anna, Finocchiaro and Fontana in \cite{D}. Subsequently, we establish several connections regarding quasi-nil cleanness between $A \bowtie^f B$ and the underlying ring $A$.

\begin{Lemma}
Let $A,B$ be two rings, $f : A \longrightarrow B$ be a ring homomorphism and $K$ be an ideal of the ring $B$. Then the following statements hold : 

$(i)$ If $A \bowtie^f K$ is a quasi-nil clean ring, then so are $A$ and $f(A)+K$.

$(ii)$ If $A \bowtie^f K$ is a quasi-nil clean ring, then for each $x \in K$, $x=h+l$ for some quasi-idempotent $h$ and nilpotent $l$ in $B$.\end{Lemma}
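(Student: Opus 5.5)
For part $(i)$ we use that homomorphic images of quasi-nil clean rings are quasi-nil clean (Example $(ii)$). The projections
\[
p_A : A \bowtie^f K \to A,\quad (a,f(a)+k)\mapsto a, \qquad p_B : A \bowtie^f K \to B,\quad (a,f(a)+k)\mapsto f(a)+k
\]
are ring homomorphisms. The first is surjective onto $A$, since $(a,f(a))\in A\bowtie^f K$ for every $a \in A$. The image of the second is exactly $f(A)+K$, which is a subring of $B$ because $K$ is an ideal. Hence both $A$ and $f(A)+K$ are quasi-nil clean.

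For part $(ii)$, fix $x \in K$ and consider $(0,x)\in A\bowtie^f K$. This element lies in the ring since $f(0)+x=x$. Quasi-nil cleanness gives
\[
(0,x)=(q_1,q_2)+(n_1,n_2),
\]
where $(n_1,n_2)$ is nilpotent in $A\bowtie^f K$ and $(q_1,q_2)^2=(k_1,k_2)(q_1,q_2)$ for some central unit $(k_1,k_2)$ of $A\bowtie^f K$. Reading off second coordinates gives $x=q_2+n_2$. Here $n_2$ is clearly nilpotent in $B$, and $q_2^2=k_2q_2$.

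The main obstacle is that $k_2$ is only known to be a central unit of the subring $A\bowtie^f K$. We need a central unit of $B$, or at least a central unit of $f(A)+K$, which is the natural reading of "quasi-idempotent in $B$" in the intended (commutative) setting. Invertibility of $k_2$ in $B$ is automatic, since the inverse $(k_1,k_2)^{-1}$ also lies in the ring. Centrality in $B$ is the delicate point.

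In the commutative case, which is the section's setting, centrality holds trivially, so $h=q_2$ and $l=n_2$ finish the proof. In general I would argue differently, using the idempotent form $q_2=k_2e_2$ with $e_2$ idempotent, together with the fact that $k_2=f(k_1)+t$ for some $t\in K$. If centrality in all of $B$ still fails, I would state the conclusion relative to $f(A)+K$, which contains $K$. This is in fact what part $(i)$ already provides: since $x\in f(A)+K$, quasi-nil cleanness of $f(A)+K$ directly yields $x=h+l$ with $h$ quasi-idempotent and $l$ nilpotent in $f(A)+K\subseteq B$.
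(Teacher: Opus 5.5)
Your proof is correct and follows the paper's route: for (i) you realize $A$ and $f(A)+K$ as images of the two projections, and for (ii) you decompose $(0,x)$ and read off second coordinates. The paper additionally shows that the first-coordinate parts vanish, so that the summands lie in $K$, but the statement does not need this. The centrality issue you flag is genuine for noncommutative $B$, and the paper assumes it away too (it calls $f(q)$ quasi-idempotent in $B$ simply because $f$ is a homomorphism). In the commutative setting of this section, where amalgamations are defined, your argument is complete, and (ii) then follows directly from (i) since $x\in f(A)+K$.
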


\begin{proof}
$(i)$ According to \cite[Propoaition 5.1(3)]{D}, both the rings $A$ and $f(A)+K$ are homomorphic image of the ring $A \bowtie^f B$. As quasi-nil clean rings are closed under homomorphic images, it follows that both $A$ and $f(A)+K$ are quasi-nil clean rings.

$(ii)$ Let $x \in K$. Since $A \bowtie^f B$ is a quasi-nil clean ring, for $(0, x) \in A \bowtie^f B $  we can write
$(0,x) = (q, f(q) + h) + (n, f(n) + l)$,
where $(q, f(q) + h)$ is a quasi-idempotent and $(n, f(n) + l)$ is a nilpotent in $A \bowtie^f B$. Therefore, we see that $q$ is a quasi-idempotent and $n$ is a nilpotent in $A$. Since $f$ is a ring homomorphism, it follows that $f(q)$ is a quasi-idempotent in $B$ and $f(n)$ is nilpotent in $B$. Moreover $q + n = 0$ implies that $f(q) = -f(n)$. Since $f(n)$ is nilpotent and $f(q)$ is quasi-idempotent, it follows that $f(q) = 0 = f(n)$. Hence $x = h + l$.    \end{proof}

\begin{Lemma}\label{2.8}
 Let $A,B$ be two rings, $f : A \longrightarrow B$ be a ring homomorphism and $K$ be an ideal of the ring $B$ such that $K \subseteq N(B)$. Then $A \bowtie^f K$ is a quasi-nil clean ring if and only if $A$ is a quasi-nil clean ring.\end{Lemma}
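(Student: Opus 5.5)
The forward implication needs no new work. By part $(i)$ of the preceding lemma, if $A \bowtie^f K$ is quasi-nil clean then so is its homomorphic image $A$; concretely, the projection $(a, f(a)+k) \mapsto a$ is a surjective ring homomorphism. So the substance of the proof is the converse, and I will argue it directly by lifting decompositions from $A$.

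For the converse, assume $A$ is quasi-nil clean and take an arbitrary element $(a, f(a)+k)$ of $A \bowtie^f K$ with $a\in A$, $k \in K$. Write $a = q + n$, where $q^2 = cq$ for some central unit $c$ of $A$ and $n \in N(A)$. The natural candidate decomposition is
\[
(a, f(a)+k) = (q, f(q)) + (n, f(n)+k).
\]
Both summands lie in $A \bowtie^f K$, since $0 \in K$ and $k \in K$. It then suffices to check two things.

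First, $(q,f(q))$ is quasi-idempotent. Its square is $(q^2, f(q)^2) = (cq, f(c)f(q)) = (c, f(c))(q, f(q))$, so I will show that $(c, f(c))$ is a central unit of $A \bowtie^f K$. It is a unit because $(c^{-1}, f(c^{-1}))$ lies in the amalgamation and is its inverse. Centrality is the one point that needs care, because $f(c)$ need not be central in $B$. However, we only need $(c,f(c))$ to commute with elements $(x, f(x)+k')$ of the subring. Since $c$ is central in $A$, we have $f(c)f(x) = f(x)f(c)$, so the remaining requirement is $f(c)k' = k'f(c)$ for all $k' \in K$. This is the main obstacle. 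If $B$ is commutative (the setting of this section), it is automatic. In general, I would first try to use the quasi-idempotent's freedom in its choice of central unit. If that fails, I would note that the paper's convention (the section is on commutative rings) lets us assume commutativity, and state the argument under that assumption.

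Second, $(n, f(n)+k)$ is nilpotent. Choose $m$ with $n^m = 0$. Then $(n, f(n)+k)^m = (0, (f(n)+k)^m)$. Because $f(n)^m = 0$, expanding $(f(n)+k)^m$ shows it lies in $K$: every term containing $k$ is in the ideal $K$, and the only other term is $f(n)^m=0$. Since $K \subseteq N(B)$ and $K$ is an ideal, $K$ is a nil ideal, so $(f(n)+k)^{m}$ is nilpotent, say with $((f(n)+k)^m)^r=0$. Hence $(n,f(n)+k)^{mr}=0$. This completes the decomposition and shows $A\bowtie^f K$ is quasi-nil clean.
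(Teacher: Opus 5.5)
Your proposal is correct and is essentially the paper's proof. It uses the same splitting $(a,f(a)+k)=(q,f(q))+(n,f(n)+k)$, with $(c,f(c))$ witnessing quasi-idempotence and $K\subseteq N(B)$ giving nilpotence; your checks that $(n,f(n)+k)^m\in\{0\}\times K$ and that $(c,f(c))$ is central fill in what the paper leaves implicit.

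Your centrality worry is well founded, and re-choosing the central unit cannot remove it, so falling back on the commutative setting is genuinely needed. Take $A=\mathbb{Z}_3\times\mathbb{Z}_3$ (which is quasi-Boolean), $B$ the upper triangular $2\times 2$ matrices over $\mathbb{Z}_3$, $f$ the diagonal embedding and $K$ the strictly upper triangular matrices. Then $A\bowtie^f K\cong B$, yet comparing diagonals shows $\mathrm{diag}(1,2)$ is not of the form $\lambda e+n$ with $\lambda\in\{1,2\}$ a central unit, $e$ idempotent and $n$ nilpotent in $B$.
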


 \begin{proof} If $A \bowtie^f K$ is a quasi-nil clean ring, then so is $A$, since $A$ is a homomorphic image of $A \bowtie^f K$.
 
Conversely, suppose that $A$ is a quasi-nil clean ring and $K \subseteq N(B)$. Let $(a,f(a)+t) \in A \bowtie^f K$. Then $a=q+n$, where $q^2=kq$ for some central unit $k$ and nilpotent $n$. This implies that $f(a)=f(q)+f(n)$, where $f(q)$ is a quasi idempotent and $f(n)$ is a nilpotent in $B$. Now $(a,f(a)+t)=(q,f(q))+(n,f(n)+t)$. Now $(q,f(q))^2=(k,f(k))(q,f(q))$. So $(q,f(q))$ is quasi-idempotent in $A \bowtie^f K$. Moreover $(n,f(n)+k)$ is a nilpotent in $A \bowtie^f K$, since $k \in K \subseteq N(B)$. Therefore, $(a,f(a)+k)$ is quasi-nil clean in $A \bowtie^f K$. Hence $A \bowtie^f K$ is a quasi-nil clean ring.
 \end{proof}

\begin{Lemma}
Let $A$ be a ring, $B$ be a regular ring, $f : A \longrightarrow B$ be a ring homomorphism and $K$ be an ideal of the ring $B$. Then $A \bowtie^f K$ is a quasi-nil clean ring if and only if $A$ is a quasi-nil clean ring.\end{Lemma}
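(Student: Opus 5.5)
The forward implication is formal. By \cite[Proposition 5.1(3)]{D} (as already used in the proof of the first lemma on amalgamations), $A$ is a homomorphic image of $A \bowtie^f K$. Quasi-nil cleanness passes to homomorphic images (Example, part $(ii)$). So if $A \bowtie^f K$ is quasi-nil clean, then so is $A$, exactly as in Lemma \ref{2.8}. All the work is in the converse: from a decomposition in $A$, build a quasi-nil clean decomposition of an arbitrary element $(a,f(a)+t)$, with $a\in A$ and $t\in K$.

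For the converse I would start as in Lemma \ref{2.8}. Write $a=ke+n$ with $k\in UC(A)$, $e\in Id(A)$ and $n\in N(A)$. Then $f(k)$ is a central unit of $f(A)+K$, $f(e)$ is an idempotent and $f(n)$ is nilpotent. The difference from Lemma \ref{2.8} is that $t$ need no longer be nilpotent, so the remainder $(n,f(n)+t)$ is not automatically nilpotent. Instead I would absorb $t$ into the quasi-idempotent part, using regularity of $B$. The point is that a suitable regular ring contains no nonzero nilpotents and each of its elements has the form $vg$, with $v$ a unit and $g$ an idempotent. Since $K$ is an ideal of a regular ring, it is itself regular (for $x\in K$, $x=xyx=x(yxy)x$ with $yxy\in K$). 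So for $t$, and more generally for $f(k)f(e)+t$ corrected on the corner $(1-f(e))B(1-f(e))$, I expect a representation $f(k)f(e)+t = w\,g$ with $g$ idempotent and $w$ a unit.

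The plan is then to choose $h\in K$ with $g=f(e)+h$, and to write the unit $w$ as $f(k)+(\text{element of }K)$, so that
$$(a,f(a)+t)=(ke,\,wg)+(n,\,f(n)),$$
where $(ke,wg)$ is quasi-idempotent in $A\bowtie^f K$ with respect to the central unit $(k,w)$ and $(n,f(n))$ is nilpotent. Concretely, I would decompose $B$ by the orthogonal idempotents $f(e)$ and $1-f(e)$ and work separately in the two corners. On the first corner, $f(k)f(e)+f(e)t$ is a unit of $f(e)Bf(e)$ up to an element of $K$. On the second corner, $(1-f(e))t$ is unit-regular inside an ideal of a regular ring, so it equals (unit)$\cdot$(idempotent in $K$). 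Adding the two pieces gives $g$ and $w$, and one checks membership in $A\bowtie^f K$ by verifying that each second coordinate differs from $f$ of the first coordinate by an element of $K$.

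The main obstacle is the step $x=wg$ with $w$ a \emph{central} unit. Regularity alone gives $x=xyx$, which yields an idempotent $xy$ but neither unit-regularity nor centrality. For a noncommutative regular ring such as $M_2(F)$ this step genuinely fails, since not every element is a central unit times an idempotent. I would therefore first try to run the argument when $B$ is commutative, or more generally abelian regular (strongly regular), where $x=vg$ with $v$ a unit holds and units are central. In that setting $B$ is reduced and quasi-Boolean, and the construction above goes through. If the intended statement really covers arbitrary regular $B$, this is the point where an additional hypothesis, or a different idea, would be needed.
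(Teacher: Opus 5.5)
Your decomposition $(a,f(a)+t)=(ke,wg)+(n,f(n))$ is exactly the paper's $(n,f(n))+(q,f(q)+t)$. The paper then cites \cite[Proposition 2.5]{qc} for ``$B$ regular $\Rightarrow$ $B$ quasi-Boolean'' and declares $(q,f(q)+t)$ quasi-idempotent. Your suspicion about noncommutative $B$ is right, and it is fatal: no other idea can rescue the statement, because it is false there. Take $K=B$, so that $A\bowtie^f B=A\times B$ has $B$ as a homomorphic image, and let $A=\mathbb{R}\subseteq B=\mathbb{H}$. In $\mathbb{H}$ we have $N(\mathbb{H})=0$, $Id(\mathbb{H})=\{0,1\}$ and $UC(\mathbb{H})=\mathbb{R}\setminus\{0\}$, so only real quaternions are quasi-nil clean. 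Hence $A\bowtie^f K$ is not quasi-nil clean, although $A$ is.

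The same example rules out your fallback to abelian regular $B$. $\mathbb{H}$ is strongly regular, yet its units are not central and it is not quasi-Boolean. In fact every quasi-Boolean ring is commutative: if $x^2=kx$ and $x^m=0$, then $x=k^{1-m}x^m=0$, so the ring is reduced, hence abelian, and every $ke$ is central. Thus the lemma, and the cited implication, can only hold for commutative $B$, which is the setting of the section where the lemma appears.

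For commutative $B$ your plan does work. It is also more careful than the paper, which never checks that the relevant central unit lies in $A\bowtie^f K$, i.e.\ that $w-f(k)\in K$. But you only posit $g=f(e)+h$ with $h\in K$ and a unit $w\equiv f(k)$ modulo $K$. Your corner remark (``$f(k)f(e)+f(e)t$ is a unit of $f(e)Bf(e)$ up to an element of $K$'') does not deliver either one. The needed fact is that the idempotent attached to $x=f(q)+t$ is congruent to $f(e)$ modulo $K$, and it is easiest to prove directly, without corners.

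Write $x=xyx$, $g=xy$ and $w=x+f(k)(1-g)$. Then:
\begin{enumerate}[(i)]
\item $w$ is a unit with inverse $gy+f(k)^{-1}(1-g)$, and $x^2=wx$.
\item $w-f(k)=x-f(k)g$.
\item Modulo $K$ we have $\bar x=\overline{f(k)}\,\overline{f(e)}$. So $\bar g$ and $\overline{f(e)}$ are idempotents generating the same principal ideal $\bar x\bar B$ of the commutative ring $B/K$, hence they are equal. Therefore $w-f(k)\in K$.
\item Consequently $(k,w)\in A\bowtie^f K$, and its inverse $(k^{-1},w^{-1})$ also lies in $A\bowtie^f K$, because $w^{-1}-f(k)^{-1}=-w^{-1}(w-f(k))f(k)^{-1}\in K$.
\item $(k,w)$ is central since $k\in C(A)$ and $B$ is commutative, and $(q,x)^2=(k,w)(q,x)$.
\end{enumerate}
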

\begin{proof}
The necessity is immediate, since $A$ is a homomorphic image of $A \bowtie^f K$.
 
Conversely, let $(a,f(a)+t) \in A \bowtie^f K$, where $t \in K$. Since $A$ is a quasi-nil clean ring, we can write $a=q+n$, where $q \in A$ is quasi idempotent and $n \in A$ is nilpotent. Then $(a,f(a)+t)=(n,f(n))+(q,f(q)+t)$, where $(n,f(n)) \in N(A \bowtie^f K)$. Since $B$ is regular, from \cite[Proposition 2.5]{qc}, it follows that $B$ is a quasi-Boolean ring. Thus $(q,f(q)+t)$ is quasi-idempotent and hence $A \bowtie^f K$ is a quasi-nil clean ring. \end{proof}

Let $R$ be a ring and $M$ be an $R$-module. Then the trivial extension of $R$ by $M$ is the ring $R \propto M=R \oplus M$ with usual addition and the multiplication is given by $(r_1,m_1)(r_2,m_2)=(r_1r_2,r_1m_2+r_2m_1)$.

\begin{Corollary} \label{cor2.10}
Let $R$ be a ring, $M$ be an $R$-module and $S=R \propto M$ be the trivial extension of $R$ by $M$. Then $S$ is a quasi-nil clean ring if and only if $R$ is a quasi-nil clean ring.
\end{Corollary}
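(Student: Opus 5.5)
The plan is to realize the trivial extension as an amalgamation and then apply Lemma \ref{2.8}. Put $A=R$ and $B=R\propto M$, let $f:R\longrightarrow R\propto M$ be the canonical embedding $f(r)=(r,0)$, and take $K=0\propto M=\{(0,m): m\in M\}$. I would first check that $K$ is an ideal of $B$: indeed $(r,m')(0,m)=(0,rm)$, and the product on the other side is handled symmetrically. I would then check that $K\subseteq N(B)$, since $(0,m)^2=(0,0)$.

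The next step is to identify $A\bowtie^f K$ with $S$. Its elements are $(r,(r,0)+(0,m))=(r,(r,m))$, and the map $(r,(r,m))\mapsto (r,m)$ is a bijection onto $R\propto M$. Because the operations in $A\bowtie^f K$ are componentwise in $A\times B$, this bijection respects addition and multiplication, so it is a ring isomorphism. This is the standard identification noted in \cite{D}.

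With these facts in place, Lemma \ref{2.8} gives directly that $S\cong R\bowtie^f K$ is quasi-nil clean if and only if $R$ is quasi-nil clean.

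As a safeguard, I would also prove the statement directly. If $S$ is quasi-nil clean, then $R$ is too, because $R\cong S/(0\propto M)$ is a homomorphic image of $S$ (Example (ii)). Conversely, suppose $r=q+n$ with $q^2=kq$, where $k\in UC(R)$. Then $(r,m)=(q,0)+(n,m)$. I need $(k,0)$ to be a central unit of $S$ with $(q,0)^2=(k,0)(q,0)$, and I need $(n,m)$ to be nilpotent, since $(n,m)^{2t}$ has first coordinate $n^{2t}$ and the square of anything in $0\propto M$ vanishes.

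The main obstacle is centrality of $(k,0)$ in $S$. This requires $km=mk$, which holds automatically when $M$ is a bimodule whose two actions agree on central elements, as the displayed multiplication implicitly assumes. I would state this convention explicitly.
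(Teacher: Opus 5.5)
Your proposal is correct and follows the paper's proof: the same embedding $f(r)=(r,0)$, the same nil ideal $K=0\propto M\subseteq N(S)$, the identification $S\cong R\bowtie^f K$, and then Lemma~\ref{2.8}. The only difference is that you check the isomorphism $(r,(r,m))\mapsto(r,m)$ by hand, whereas the paper cites \cite[Proposition 5.1(3)]{D} using $f^{-1}(K)=0$.

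Your centrality caveat is well placed. The same condition $f(k)t=tf(k)$ is silently needed in the proof of Lemma~\ref{2.8}, for $(k,f(k))$ to be central in $A\bowtie^f K$. It holds under the paper's multiplication $(r_1,m_1)(r_2,m_2)=(r_1r_2,r_1m_2+r_2m_1)$. It can fail for a genuinely two-sided bimodule: take $R=M=\mathbb{C}$ with the right action twisted by conjugation; then $(i,0)$ is not quasi-nil clean in $S$, although $\mathbb{C}$ is quasi-nil clean.
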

\begin{proof}
Consider a ring monomorphism $f : R \longrightarrow S$ defined by $f(r)=(r,0)$ for every $r \in R$ and the ideal $K=0 \propto M$ of $R$. Then $f(R)+K=R \propto 0 + 0 \propto M=R \propto M = S \simeq R \bowtie^f K$, by \cite[Proposition $5.1(3)$]{D} as $f^{-1}(K)=0$. Note that $K=0 \propto M \subseteq N(S)$. Then by using Theorem \ref{2.8}, it follows that $S$ is a quasi-nil clean ring if and if $R$ is a quasi-nil clean ring.\end{proof}

\begin{Example}
    Let $R=\mathbb{Z}_{12}$. Then it is a quasi-nil clean ring with the ideal $K=<6> \subseteq N(R)$. Consider any non zero $R$-module $M$ and let $S=R \propto M$. Consider a ring epimorphism $f: S \longrightarrow R$ defined by $f(r,m)=r$ for all $r \in R$ and $m \in M$. Then

    $(i)$ $S \bowtie^f K$ is quasi-nil clean.

    $(ii)$ $S \bowtie^f K$ is not nil clean.

    Since $R$ is a quasi-nil clean ring, by Corollary \ref{cor2.10}, it follows that $S$ is a quasi-nil clean ring. Note that $K \subseteq N(R)$. Therefore, by Lemma \ref{2.8}, we have $S \bowtie^f K$ is a quasi-nil clean ring as $S$ is a quasi-nil clean ring. Note that $f(S)+K=R+K=R$ which is not nil clean because $2^2-2 \notin N(R)$ using \cite[Lemma 2.1]{c1}. Therefore, by \cite[Theorem 2.1]{b1}, it follows that $S \bowtie^f K$ is not nil clean.
\end{Example}

Now we determine some conditions under which the group ring over quasi-nil clean ring is quasi-nil clean.

If $R$ is a ring and $G$ is a group, then $RG$ denotes the group ring of $G$ over $R$. The ring homomorphism $\omega : RG \longrightarrow R$ defined as $~\Sigma r_g g \longrightarrow \Sigma r_g$ is called augmentation map. The kernel Ker$(\omega)$  is called the augmentation ideal of the group ring $RG$ and is denoted by $\Delta(RG)$. It is well known that $\Delta(RG)$ is generated by the set $\{ 1-g~: g \in G \}$.

Recall that a group $G$ is said to be a locally finite group if every finitely generated subgroup of $G$ is finite. A group $G$ is called a torsion group if every element of $G$ has finite order. If $p$ is prime, $g \in G$ is called the $p$-torsion element if the order of $g$ is $p^k$ for some $k \geq 1$. The group $G$ is called $p$-group if every element of $G$ is $p$-torsion. Also recall that for a given prime $p$, we say that a finite group $G$ is an elementary $p$-group if $G$ is a direct product of finitely many copies of $\mathbb{Z}_p$, i.e. $G \cong \mathbb{Z}^m_p$ for some integer $m \geq 1$.

\begin{Proposition}
If $R$ is a quasi-nil clean ring with $2 \in U(R)$ and $G$ is a finite abelian elementary $2$-group, then the group ring $RG$ is a quasi-nil clean ring. \end{Proposition}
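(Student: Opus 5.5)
The plan is to show that $RG$ decomposes as a finite direct product of copies of $R$, and then apply Example $(iii)$: a finite direct product of quasi-nil clean rings is quasi-nil clean. Since $G \cong \mathbb{Z}_2^m$, I argue by induction on $m$. The case $m=0$ (trivial group) is immediate, as $RG = R$. For the inductive step, write $G = H \times \langle g \rangle$ with $H \cong \mathbb{Z}_2^{m-1}$ and $g^2=1$. Then $RG \cong (RH)\langle g\rangle$, the group ring of the cyclic group of order $2$ over $RH$. So it suffices to prove the following: for any ring $S$ with $2 \in U(S)$ and $C_2=\langle g\rangle$, we have $SC_2 \cong S \times S$. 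We apply this with $S = RH$, noting that $2$ is still a unit in $RH$ because $R \subseteq RH$ shares its identity.

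For the key step, put $e = \frac{1+g}{2} \in SC_2$, which makes sense because $2 \in U(S)$. Since $g$ commutes with every element of $S$ in the group ring, and $G$ is abelian, $g$ is central in $SC_2$; hence $e$ is central. Using $g^2=1$, one checks
\[
e^2=\tfrac{1+2g+g^2}{4}=\tfrac{2+2g}{4}=e,
\]
so $e$ is a central idempotent and $SC_2 = e\,SC_2 \times (1-e)\,SC_2$ as rings. Moreover $eg = e$ and $(1-e)g = -(1-e)$. Consequently every element $s_0+s_1g$ satisfies
\[
e(s_0+s_1g)=(s_0+s_1)e, \qquad (1-e)(s_0+s_1g)=(s_0-s_1)(1-e).
\]
Thus the map $S \to e\,SC_2$, $s \mapsto se$, is a surjective ring homomorphism with unit $e$. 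It is injective because $se = \frac{s}{2} + \frac{s}{2}g = 0$ forces $s=0$. The same argument shows $S \cong (1-e)\,SC_2$ via $s\mapsto s(1-e)$. Hence $SC_2 \cong S\times S$. (Equivalently, $s_0+s_1g \mapsto (s_0+s_1,\, s_0-s_1)$ is an isomorphism, with inverse $(x,y)\mapsto \frac{x+y}{2}+\frac{x-y}{2}g$.)

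Iterating gives $RG \cong R^{2^m}$. Since $R$ is quasi-nil clean, Example $(iii)$ shows $RG$ is quasi-nil clean. The only point needing care is the inductive application: the identification $R(H\times\langle g\rangle)\cong (RH)\langle g\rangle$ is standard, and $RH \cong R^{2^{m-1}}$ by induction. From this we get $RG \cong (RH\times RH)$ directly, so I never need $RH$ itself to be quasi-nil clean beyond its being a finite product of copies of $R$. I expect no real obstacle: the main check is that $e$ is central even for noncommutative $R$, which holds because group elements commute with coefficients in $RG$ and $G$ is abelian.
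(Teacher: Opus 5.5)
Your proof is correct and takes the same route as the paper. Both arguments induct on the number of $\mathbb{Z}_2$ factors using $S\mathbb{Z}_2 \cong S \oplus S$ whenever $2 \in U(S)$, obtain $RG \cong R^{2^m}$, and conclude from closure of quasi-nil clean rings under finite direct products; the only difference is that you verify the key isomorphism via the central idempotent $e=\frac{1+g}{2}$ (equivalently $s_0+s_1g \mapsto (s_0+s_1,\, s_0-s_1)$), whereas the paper simply asserts it.
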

\begin{proof}
Since $G$ is a finite abelian elementary $2$-group, it can be written as $~G \cong \mathbb{Z}_2\times \mathbb{Z}_2 \ldots \times \mathbb{Z}_2$. Given that $2 \in U(R)$, then $R\mathbb{Z}_2 \cong R \oplus R$. Again since $2$ is invertible in $R\mathbb{Z}_2$, $R(\mathbb{Z}_2 \times \mathbb{Z}_2) \cong R\mathbb{Z}_2 \oplus R\mathbb{Z}_2 \cong R \oplus R \oplus R \oplus R$. Inductively, we get $RG \cong R(\mathbb{Z}_2\times \mathbb{Z}_2 \ldots \times \mathbb{Z}_2) \cong R \oplus R\oplus R \ldots \oplus R $. Since $R$ is a quasi-nil clean ring, so the group ring $RG$ is a quasi-nil clean ring.
\end{proof}

\begin{Proposition}
Let $R$ be a commutative ring of characteristic $2^k$ ($k\in \mathbb{N}$) and $G$ be an abelian group. If $R$ is a quasi-nil clean ring and $G$ is a $2$-torsion group, then the group ring $RG$ is a quasi-nil clean ring. \end{Proposition}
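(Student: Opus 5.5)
The plan is to show that the augmentation ideal $\Delta(RG)$ is nil, and then apply Proposition \ref{q}. The map $\omega : RG \to R$ is surjective with kernel $\Delta(RG)$, so $RG/\Delta(RG) \cong R$, which is quasi-nil clean by hypothesis. Since $R$ is commutative and $G$ is abelian, $RG$ is commutative. Hence, once $\Delta(RG)$ is known to be nil, Proposition \ref{q} (the commutative nil-ideal lifting result) yields at once that $RG$ is quasi-nil clean.

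The nilness of $\Delta(RG)$ will be proved in two steps.

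First, I treat the generators $1-g$ for $g \in G$. Since $G$ is a $2$-torsion group, $g^{2^m}=1$ for some $m$. Because $RG$ is commutative, the binomial theorem gives
\[
(1-g)^{2^m} = \sum_{i=0}^{2^m} \binom{2^m}{i}(-g)^i .
\]
In characteristic $2$ the middle coefficients vanish, leaving $1 - g^{2^m} = 0$ (note $(-1)^{2^m}=1$). In characteristic $2^k$ with $k>1$ they do not simply vanish. Instead I would use the standard fact that in a commutative ring $(x+y)^{2^m} \equiv x^{2^m}+y^{2^m} \pmod{2}$, so $(1-g)^{2^m} = 1-g^{2^m} + 2z = 2z$ for some $z \in RG$. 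Then $(1-g)^{2^m k} = 2^k z^k = 0$, since $2^k = 0$ in $R$. Thus each $1-g$ is nilpotent.

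Second, $\Delta(RG)$ is generated as an ideal by the elements $1-g$. In a commutative ring the nilpotent elements form an ideal (the nilradical), so every element of $\Delta(RG)$, being an $RG$-linear combination of nilpotents, is itself nilpotent. Hence $\Delta(RG)$ is a nil ideal.

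The main obstacle is the case $k>1$ of the first step, where Frobenius is not additive. This is handled by the ``mod $2$'' binomial argument above: first reduce $(1-g)^{2^m}$ into $2RG$, then use that $2$ is nilpotent. An equivalent route is to note that $2R$ is a nil ideal and pass to $(R/2R)G$, which has characteristic $2$. Local finiteness of $G$ is not needed, because the argument only uses the finite order of each individual $g$ together with commutativity.
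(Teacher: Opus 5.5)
Your proof is correct and follows the same route as the paper: you show each $1-g$ is nilpotent, deduce that $\Delta(RG)$ is nil, and apply Proposition \ref{q} to $RG/\Delta(RG)\cong R$. The one difference is that the paper cites an external lemma for the nilpotency of $1-g$, whereas you prove it directly with the mod-$2$ binomial argument. You also spell out, via the nilradical of the commutative ring $RG$, why an ideal generated by nilpotents is nil, a step the paper passes over.
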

\begin{proof}
 Suppose that $R$ is a quasi-nil clean ring and $G$ is a $2$-torsion group. Since the characteristic of $R$ is $2^k$, we have $2^{k} = 0$ in $R$, which implies that $2$ is nilpotent in $R$. By \cite[Lemma~2.5]{m1}, it follows that $1 - g$ is nilpotent in $RG$ for every $g \in G$. The augmentation ideal $\Delta (RG)$ is generated by the set $\{ 1-g~: g \in G \}$ and so it is nil. Then the result follows from Proposition \ref{q}.
\end{proof}

\begin{Theorem}
Let $R$ be a commutative ring, $p$ be a prime such that $p \in J(R)$ and $G$ be an abelian group. If $G$ is a locally finite $p$-group, then the group ring $RG$ is a quasi-nil clean ring if and only if $R$ is a quasi-nil clean ring. \end{Theorem}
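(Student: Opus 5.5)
The plan is to show that the augmentation ideal $\Delta(RG)$ lies in $J(RG)$ and is nil under our hypotheses, and then pass to the quotient $RG/\Delta(RG)\cong R$ via the augmentation map $\omega$.

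For the forward direction, suppose $RG$ is quasi-nil clean. Then $R\cong RG/\Delta(RG)$ is a homomorphic image of $RG$, so $R$ is quasi-nil clean by Example (ii).

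For the converse, suppose $R$ is quasi-nil clean. By Proposition \ref{j}, $J(R)$ is nil, so $p\in J(R)$ is nilpotent, say $p^m=0$. Since $RG$ is commutative, I would use Proposition \ref{q}: it suffices to show that $\Delta(RG)$ is a nil ideal, because then $RG$ is quasi-nil clean if and only if $RG/\Delta(RG)\cong R$ is.

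To show $\Delta(RG)$ is nil, let $x=\sum_i r_i(1-g_i)$ with $g_1,\dots,g_t\in G$. These finitely many elements generate a finite subgroup $H$ of $G$, because $G$ is locally finite. Since everything is commutative, it is enough to show that each $1-g$ is nilpotent for $g\in H$; a finite sum of nilpotents multiplied by scalars is then nilpotent.

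Let $g$ have order $p^s$. In characteristic-free form,
\[
(1-g)^{p^s} \equiv 1-g^{p^s}=0 \pmod{pRG},
\]
since the binomial coefficients $\binom{p^s}{j}$ with $0<j<p^s$ are divisible by $p$ (applied iteratively through Frobenius). More precisely, $(1-g)^{p^s}=p\,y$ for some $y\in RG$. Then
\[
(1-g)^{p^s m}=p^m y^m=0,
\]
using commutativity. This is the same computation as in \cite[Lemma~2.5]{m1}, which the preceding proposition already invoked with $2$ nilpotent, so I would cite it with $p$ in place of $2$.

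The main obstacle is this nilpotence step. I must make sure the congruence $(1-g)^{p^s}\in pRG$ holds. Induction on $s$ handles it: $(1-g)^p=1-g^p+p z$ for some $z\in RG$. Raising to successive $p$-th powers keeps the error term in the ideal $pRG$, because $(a+pz)^p\in a^p+pRG$.

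Everything else is routine. That includes the reduction to finitely many group elements via local finiteness, and the fact that sums of nilpotents in a commutative ring are nilpotent.
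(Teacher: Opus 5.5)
Your proposal is correct and follows the paper's route. $J(R)$ nil makes $p$ nilpotent, hence $\Delta(RG)$ is nil, and Proposition \ref{q} then transfers quasi-nil cleanness from $RG/\Delta(RG)\cong R$ to $RG$. The only difference is that you prove nil-ness of $\Delta(RG)$ directly via $(1-g)^{p^s}\in pRG$ instead of citing \cite[Proposition 16(ii)]{c2}; the inclusion $\Delta(RG)\subseteq J(RG)$ that the paper cites separately is not needed, since it follows from nil-ness.
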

\begin{proof}
    If the group ring $RG$ is a quasi-nil clean ring, then so is the ring $R$, as $R$ is a homomorphic image of $RG$.

    Conversely, suppose that $R$ is a quasi-nil clean ring with $p \in J(R)$. Let $\omega : RG \longrightarrow R$ be a ring homomorphism with an augmentation ideal $\Delta(RG)$. Since $G$ is a abelian $p$-group and $p \in J(R) $, so by \cite[Theorem 2.1]{gr1}, it follows that $\Delta(RG) \subseteq J(RG)$. Since $R$ is a quasi-nil clean ring, by Proposition \ref{j} $J(R)$ is nil. Then $p$ is a nilpotent in $R$. Since $G$ is locally finite $p$-group and $p$ is nilpotent in $R$, by \cite[Proposition $16(ii)$]{c2}, it follows that $\Delta(RG)$ is nil. Now $RG/\Delta(RG) \cong R$. Thus it follows that $RG/\Delta(RG)$ is a quasi-nil clean ring. Hence, $RG$ is a quasi-nil clean ring.
\end{proof}

\noindent
\textbf{\large Funding}

\noindent
The research of the first author is funded by  University Grants Commission (UGC) (Award Letter No.: 211610081351), Govt. of India.
\vspace{0.5cm}

\noindent
\textbf{\large Declarations} 

\noindent
\textbf{Conflict of interest:} All authors declare that they have no conflicts of interest.

\end{document}